\newtheorem{assumption}[theorem]{Assumption}
\newtheorem{lem}[theorem]{Lemma}
\newtheorem{cor}[theorem]{Corollary}
\newtheorem{defn}[theorem]{Definition}
\newtheorem{remark}[theorem]{Remark}
\newcommand{\abs}[1]{| #1 |}
\DeclareMathOperator*{\argmax}{arg\,max}
\DeclareMathOperator*{\argmin}{arg\,min}
\newcommand{\bbE}{\mathbb{E}}
\newcommand{\bbK}{\mathbb{M}}
\newcommand{\bbN}{\mathbb{N}}
\newcommand{\bbP}{\mathbb{P}}
\newcommand{\bbQ}{\mathbb{Q}}
\newcommand{\bbR}{\mathbb{R}}
\newcommand{\bz}{\bar z}
\newcommand{\cC}{\mathcal{C}}
\newcommand{\cG}{\mathcal{G}}
\newcommand{\cH}{\mathcal{H}}
\newcommand{\cN}{\mathcal{N}}
\newcommand{\e}{\mathrm{e}}
\let\eps=\varepsilon
\newcommand{\gd}{\delta}
\newcommand{\la}{\langle}
\newcommand{\N}{\mathbb{N}}
\newcommand{\norm}[1]{\| #1 \|}
\newcommand{\pd}{\partial}
\newcommand{\quark}{{\setbox0\hbox{$x$}\hbox to\wd0{\hss$\cdot$\hss}}}
\newcommand{\R}{\mathbb{R}}
\newcommand{\ra}{\rangle}
\newcommand{\rd}{\mathrm{d}}
\newcommand{\TT}{\mathbb{T}}
\newcommand{\ud}{\mathrm{d}}
\newcommand{\utr}{u^\dagger}
\newcommand{\Z}{\mathbb{Z}}
\begin{document}

\title{MAP Estimators and Their Consistency in Bayesian Nonparametric Inverse Problems}
\author{M. Dashti\footnote{Department of Mathematics, University of Sussex, Brighton BN1 9QH, UK},
  K.J.H. Law,
  A.M. Stuart\footnote{Mathematics Institute, University of Warwick, Coventry, CV4 7AL, UK} and
  J. Voss\footnote{School of Mathematics, University of Leeds, Leeds, LS2 9JT, UK}}
\maketitle

\begin{abstract}
  We consider the inverse problem of estimating an unknown function
  $u$ from noisy measurements $y$ of a known, possibly nonlinear, map
  $\cG$ applied to~$u$.  We adopt a Bayesian approach to the problem
  and work in a setting where the prior measure is specified as a
  Gaussian random field~$\mu_0$. We work under a natural set of
  conditions on the likelihood which imply the existence of a
  well-posed posterior measure, $\mu^y$.  Under these conditions we
  show that the {\em maximum a posteriori} (MAP) estimator is
  well-defined as the minimiser of an Onsager-Machlup functional
  defined on the Cameron-Martin space of the prior; thus we link a
  problem in probability with a problem in the calculus of variations.
  We then consider the case where the observational noise vanishes and
  establish a form of Bayesian posterior consistency for the MAP estimator.
We also prove a
  similar result for the case where the observation of $\cG(u)$ can be
  repeated as many times as desired with independent identically
  distributed noise.  The theory is illustrated with examples from an
  inverse problem for the Navier-Stokes equation, motivated by
  problems arising in weather forecasting, and from the theory of
  conditioned diffusions, motivated by problems arising in molecular
  dynamics.
\end{abstract}


\section{Introduction}

This article considers questions from Bayesian statistics in an
infinite dimensional setting, for example in function spaces.  We
assume our state space to be a general separable Banach space
$\bigl(X, \|\quark\|_X\bigr)$.  While in the finite-dimensional
setting, the prior and posterior distribution of such statistical
problems can typically be described by densities w.r.t.\ the Lebesgue
measure, such a characterisation is no longer possible in the infinite
dimensional spaces we consider here: it can be shown that no analogue
of the Lebesgue measure exists in infinite dimensional spaces.  One
way to work around this technical problem is to replace Lebesgue
measure with a Gaussian measure on $X$, \textit{i.e.}\ with a Borel
probability measure~$\mu_0$ on $X$ such that all finite-dimensional
marginals of $\mu_0$ are (possibly degenerate) normal
distributions.  Using a fixed, centred (mean-zero) Gaussian measure
$\mu_0 = \cN(0,\cC_0)$ as a reference measure, we then assume that the
distribution of interest, $\mu$, has a density with respect to $\mu_0$:
\begin{equation}\label{e:radon}
  \frac{\ud \mu}{\ud \mu_0}(u)
  \propto \exp\bigl(-\Phi(u) \bigr).
\end{equation}
Measures $\mu$ of this form arise naturally in a number of
applications, including the theory of conditioned diffusions
\cite{HSV11} and the Bayesian approach to inverse
problems~\cite{St10}.  In these settings there are many applications
where $\Phi\colon X\to\R$ is a locally Lipschitz continuous function
and it is in this setting that we work.

Our interest is in defining the concept of ``most likely'' functions
with respect to the measure $\mu$, and in particular the {\em maximum
  a posteriori} estimator in the Bayesian context. We will refer to
such functions as MAP estimators throughout. We will define the
concept precisely and link it to a problem in the calculus of
variations, study posterior consistency of the MAP estimator in the
Bayesian setting, and compute it for a number of illustrative
applications.

To motivate the form of MAP estimators considered here we consider the
case where $X=\R^d$ is finite dimensional and the prior $\mu_0$ is
Gaussian $\cN(0,\cC_0)$.  This prior has density
$\exp(-\frac12|\cC_0^{-1/2}u|^2)$ with respect to the Lebesgue measure
where $|\quark|$ denotes the Euclidean norm.  The probability density
for $\mu$ with respect to the Lebesgue measure, given
by~\eqref{e:radon}, is maximised at minimisers of
\begin{equation}\label{eq:I-finite}
  I(u):=\Phi(u)+\frac12\|u\|_E^2
\end{equation}
where $\|\quark\|_E=|\cC_0^{-1/2}u|$.  We would like to derive such a
result in the infinite dimensional setting.

The natural way to talk about MAP estimators in the infinite
dimensional setting is to seek the centre of a small ball with maximal
probability, and then study the limit of this centre as the radius of
the ball shrinks to zero.  To this end, let $B^\delta(z)\subset X$ be
the open ball of radius $\delta$ centred at $z\in X$. If there is a
functional $I$, defined on $E$, which satisfies
\begin{equation}\label{eq:OM1}
  \lim_{\delta\to 0}\frac{\mu(B^\delta(z_2))}{\mu(B^\delta(z_1))}
  = \exp\bigl(I(z_1)-I(z_2)\bigr),
\end{equation}
then $I$ is termed the {\em Onsager-Machlup} functional
\cite{db78,iw89}.  For any fixed $z_1$, the function $z_2$ for which
the above limit is maximal is a natural candidate for the MAP
estimator of $\mu$ and is clearly given by minimisers of the
Onsager-Machlup function. In the finite dimensional case it is clear
that $I$ given by \eqref{eq:I-finite} is the Onsager-Machlup
functional.

From the theory of infinite dimensional Gaussian measures
\cite{Lif95,Bog98} it is known that copies of the
Gaussian measure $\mu_0$ shifted by $z$ are absolutely continuous w.r.t.\ $\mu_0$
itself, if and only if $z$ lies in the Cameron-Martin space $\bigl(E,
\langle \cdot, \cdot \rangle_E, \|\quark\|_E\bigr)$; furthermore, if the shift
direction $z$ is in $E$, then shifted measure~$\mu_z$ has density
\begin{equation}\label{eq:CM-intro}
  \frac{d\mu_z}{d\mu_0}
  = \exp\Bigl(\la z, u\ra_E - \frac12 \|z\|_E^2\Bigr).
\end{equation}
In the finite dimensional example, above, the Cameron-Martin norm of
the Gaussian measure $\mu_0$ is the norm~$\|\quark\|_E$ and it is easy
to verify that~\eqref{eq:CM-intro} holds for all $z\in\R^d$.  In the
infinite dimensional case, it is important to keep in mind
that~\eqref{eq:CM-intro} only holds for $z\in E\subsetneq X$.
Similarly, the relation~\eqref{eq:OM1} only holds for $z_1, z_2\in E$.
In our application, the Cameron-Martin formula~\eqref{eq:CM-intro} is
used to bound the probability of the shifted ball $B^\delta(z_2)$ from
equation~\eqref{eq:OM1}.  (For an exposition of the standard results
about small ball probabilities for Gaussian measures we refer
to~\cite{Bog98,Lif95}; see also \cite{LedTal91} for related material.)
The main technical difficulty that is
encountered stems from the fact that the Cameron-Martin space~$E$,
while being dense in~$X$, has measure zero with respect to~$\mu_0$.
An example where this problem can be explicitly seen is the case where
$\mu_0$ is the Wiener measure on $L^2$; in this example $E$
corresponds to a subset of the Sobolov space~$H^1$, which has indeed measure zero
w.r.t.\ Wiener measure.

Our theoretical results assert that despite these technical
complications the situation from the finite-dimensional example,
above, carry over to the infinite dimensional case essentially without
change.  In Theorem~\ref{t:OM} we show that the Onsager-Machlup
functional in the infinite dimensional setting still has the
form~\eqref{eq:I-finite}, where $\|\quark\|_E$ is now the
Cameron-Martin norm associated to~$\mu$ (using $\| z \|_E = \infty$
for $z\in X\setminus E$), and in Corollary~\ref{c:MAPmin} we show that
the MAP estimators for $\mu$ lie in the Cameron-Martin space~$E$ and
coincide with the minimisers of the Onsager-Machlup functional~$I$.

In the second part of the paper, we consider the inverse problem of
estimating an unknown function $u$ in a Banach space $X$, from a given
observation $y \in \bbR^J$, where
\begin{equation}\label{e:obs}
  y=G(u)+\zeta;
\end{equation}
here $G\colon X\to \bbR^J$ is a possibly nonlinear operator, and
$\zeta$ is a realization of an $\bbR^J$-valued centred Gaussian random
variable with known covariance $\Sigma$.  A prior probability measure
$\mu_0(\ud u)$ is put on $u$, and the distribution of $y|u$ is given
by \eqref{e:obs}, with $\zeta$ assumed independent of $u$.  Under
appropriate conditions on $\mu_0$ and $G$, Bayes theorem is
interpreted as giving the following formula for the Radon-Nikodym
derivative of the posterior distribution $\mu^y$ on $u|y$ with respect
to $\mu_0$:
\begin{equation}\label{e:radon1}
  \frac{\ud\mu^y}{\ud\mu_0}(u)
  \propto\exp \bigl(-\Phi(u;y)\bigr),
\end{equation}
where
\begin{equation}\label{eq:fy}
  \Phi(u;y)
  = \frac12\Bigl|\Sigma^{-\frac12}\bigl(y-G(u)\bigr)\Bigr|^2.
\end{equation}
Derivation of Bayes formula \eqref{e:radon1} for problems with finite
dimensional data, and $\zeta$ in this form, is discussed in
\cite{CDRS09}.  Clearly, then, Bayesian inverse problems with Gaussian
priors fall into the class of problems studied in this paper, for
potentials $\Phi$ given by \eqref{eq:fy} which depend on the observed
data $y$.  When the probability measure $\mu$ arises from the Bayesian
formulation of inverse problems, it is natural to ask whether the MAP
estimator is close to the truth underlying the data, in either the
small noise or large sample size limits. This is a form of Bayesian
posterior consistency, here defined in terms of the MAP estimator
only. We will study this question for finite observations of a
nonlinear forward model, subject to Gaussian additive noise.

The paper is organized as follows:
\begin{itemize}
\item in section~\ref{s:Bayes} we detail our assumptions on $\Phi$ and
  $\mu_0$;
\item in section~\ref{s:MAP} we give conditions for the existence of
  an Onsager-Machlup functional $I$ and show that the MAP estimator is
  well-defined as the minimiser of this functional;
\item in section~\ref{s:consistency} we study the problem of Bayesian
  posterior consistency by studying limits of Onsager-Machlup
  minimisers in the small noise and large sample size limits;
\item in section~\ref{sec:fm} we study applications arising
from data assimilation for the Navier-Stokes equation, as
a model for what is done in weather prediction;
\item in section~\ref{sec:cd} we study applications arising in the
  theory of conditioned diffusions.
\end{itemize}

We conclude the introduction with a brief literature review.  We first
note that MAP estimators are widely used in practice in the infinite
dimensional context \cite{Rasmussen2005,Kaipio2005}.  We also note
that the functional $I$ in \eqref{eq:I-finite} resembles a
Tikhonov-Phillips regularization of the minimisation problem for
$\Phi$ \cite{Eng96}, with the Cameron-Martin norm of the prior
determining the regularization. In the theory of classical
non-statistical inversion, formulation via Tikhonov-Phillips
regularization leads to an infinite dimensional optimization problem
and has led to deeper understanding and improved algorithms.  Our aim
is to achieve the same in a probabilistic context.  One way of
defining a MAP estimator for $\mu$ given by \eqref{e:radon} is to
consider the limit of parametric MAP estimators: first discretize the
function space using $n$ parameters, and then apply the finite
dimensional argument above to identify an Onsager-Machlup functional
on $\bbR^n$. Passing to the limit $n \to \infty$ in the functional
provides a candidate for the limiting Onsager-Machlup functional.
This approach is taken in \cite{mar09,mor68,rog10} for problems
arising in conditioned diffusions.  Unfortunately, however, it does
not necessarily lead to the correct identification of the
Onsager-Machlup functional as defined by \eqref{eq:OM1}.  The reason
for this is that the space on which the Onsager-Mahlup functional is
defined is smoother than the space on which small ball probabilities
are defined. Small ball probabilities are needed to properly define
the Onsager-Machlup functional in the infinite dimensional limit. This
means that discretization and use of standard numerical analysis limit
theorems can, if incorrectly applied, use more regularity than is
admissible in identifying the limiting Onsager-Mahlup functional.  We
study the problem directly in the infinite dimensional setting,
without using discretization, leading, we believe, to greater
clarity. Adopting the infinite dimensional perspective for MAP
estimation has been widely studied for diffusion processes
\cite{DeZe91} and related stochastic PDEs \cite{Zet89}; see
\cite{Zet00} for an overview. Our general setting is similar to that
used to study the specific applications arising in the papers
\cite{DeZe91,Zet89,Zet00}.  By working with small ball properties of
Gaussian measures, and assuming that $\Phi$ has natural continuity
properties, we are able to derive results in considerable generality.
There is a recent related definition of MAP estimators in
\cite{Heg07}, with application to density estimation in
\cite{GrHeg10}. However, whilst the goal of minimising $I$ is also
identified in \cite{Heg07}, the proof in that paper is only valid in
finite dimensions since it implicitly assumes that the Cameron-Martin
norm is $\mu_0-$a.s. finite.  In our specific application to fluid
mechanics our analysis demonstrates that widely used {\em variational
  methods} \cite{ben} may be interpreted as MAP estimators for an
appropriate Bayesian inverse problem and, in particular, that this
interpretation, which is understood in the atmospheric sciences
community in the finite dimensional context, is well-defined in the
limit of infinite spatial resolution.

Posterior consistency in Bayesian nonparametric statistics has a long
history \cite{ghosal99}.  The study of posterior consistency for the
Bayesian approach to inverse problems is starting to receive
considerable attention. The papers \cite{bart11,als12} are devoted to
obtaining rates of convergence for linear inverse problems with
conjugate Gaussian priors, whilst the papers \cite{BocGr11,Ray12}
study non-conjugate priors for linear inverse problems.  Our analysis
of posterior consistency concerns nonlinear problems, and finite data
sets, so that multiple solutions are possible. We prove an appropriate
weak form of posterior consistency, without rates, building on ideas
appearing in \cite{BisH04}.

Our form of posterior consistency is weaker than the general form of
Bayesian posterior consistency since it does not concern fluctuations
in the posterior, simply a point (MAP) estimator. However we note that
for linear Gaussian problems there are examples where the conditions
which ensure convergence of the posterior mean (which coincides with
the MAP estimator in the linear Gaussian case) also ensure posterior
contraction of the entire measure \cite{als12,bart11}.


\section{Set-up}
\label{s:Bayes}

Throughout this paper we assume that $\bigl( X, \|\quark\|_X\bigr)$ is
a separable Banach space and that $\mu_0$ is a centred Gaussian
(probability) measure on $X$ with Cameron-Martin space $\bigl(E,
\langle \cdot, \cdot \rangle_E, \|\quark\|_E\bigr)$.  The measure
$\mu$ of interest is given by~\eqref{e:radon} and we make the
following assumptions concerning the {\em potential}~$\Phi$.

\begin{assumption} \label{a:asp1}
  The function $\Phi\colon X\to\R$ satisfies the following conditions:
  \begin{itemize}
  \item[(i)] For every $\eps>0$ there is an $M\in \R$, such that
    for all $u\in X$,
    \begin{equation*}
      \Phi(u) \geq M -\eps\|u\|_X^2.
    \end{equation*}
  \item[(ii)] $\Phi$ is locally bounded from above, \textit{i.e.}\ for
    every $r>0$ there exists $K=K(r)>0$ such that, for all $u\in X$
    with $\|u\|_X<r$ we have
    \begin{equation*}
      \Phi(u) \leq K.
    \end{equation*}
  \item[(iii)] $\Phi$ is locally Lipschitz continuous, \textit{i.e.}\
    for every $r>0$ there exists $L=L(r)>0$ such that for all
    $u_1,u_2\in X$ with $\|u_1\|_X,\|u_2\|_X < r$ we have
    \begin{equation*}
      |\Phi(u_1)-\Phi(u_2)| \leq L\|u_1-u_2\|_X.
    \end{equation*}
  \end{itemize}
\end{assumption}

Assumption~\ref{a:asp1}(i) ensures that the expression~\eqref{e:radon}
for the measure $\mu$ is indeed normalizable to give a probability
measure; the specific form of the lower bound is designed to ensure
that application of the Fernique Theorem (see \cite{Bog98}
or~\cite{Lif95}) proves that the required normalization constant is
finite.  Assumption~\ref{a:asp1}(ii) enables us to get explicit bounds
from below on small ball probabilities and
Assumption~\ref{a:asp1}(iii) allows us to use continuity to control
the Onsager-Machlup functional.  Numerous examples satisfying these
condition are given in the references~\cite{St10,HSV11}.  Finally, we
define a function~$I\colon X\to \R$ by
\begin{equation}\label{eq:I}
  I(u) =
  \begin{cases}
    \Phi(u) + \frac12\|u\|_E^2 & \mbox{if $u \in E$, and} \\
    +\infty & \mbox{else.}
  \end{cases}
\end{equation}
We will see in section~\ref{s:MAP} that $I$ is the Onsager-Machlup
functional.

\begin{remark}\label{r:mean}
  We close with a brief remark concerning the definition of the
  Onsager-Machlup function in the case of non-centred reference
  measure $\mu_0=\cN(m,\cC_0)$.  Shifting coordinates by $m$ it is
  possible to apply the theory based on centred Gaussian measure
  $\mu_0$, and then undo the coordinate change.  The relevant
  Onsager-Machlup functional can then be shown to be
  \begin{equation*}
    I(u) =
    \begin{cases}
      \Phi(u) + \frac12\|u-m\|_E^2 & \mbox{if $u-m \in E$, and} \\
      +\infty & \mbox{else.}
    \end{cases}
  \end{equation*}
\end{remark}


\section{MAP estimators and the Onsager-Machlup functional}
\label{s:MAP}

In this section we prove two main results. The first, Theorem
\ref{t:OM}, establishes that $I$ given by \eqref{eq:I-finite} is
indeed the Onsager-Machlup functional for the measure $\mu$ given by
\eqref{e:radon}. Then Theorem~\ref{t:MAP} and Corollary
\ref{c:MAPmin}, show that the MAP estimators, defined precisely in
Definition~\ref{d:MAP}, are characterised by the minimisers of the
Onsager-Machlup functional.

For $z \in X$, let $B^\delta(z)\subset X$ be the open ball centred at
$z\in X$ with radius~$\delta$ in~$X$.  Let
\begin{equation*}
  J^{\delta}(z)
  = \mu\bigl( B^\delta(z) \bigr)
\end{equation*}
be the mass of the ball~$B^\delta(z)$.  We first define the
MAP estimator for $\mu$ as follows:

\begin{defn}\label{d:MAP}
  Let
  \begin{equation*}
    z^\delta=\argmax_{z \in X} J^{\delta}(z).
  \end{equation*}
  Any point $\tilde z\in X$ satisfying $\lim_{\gd\to 0}(J^\gd(\tilde
  z)/J^\gd(z^\gd))=1$, is a MAP estimator for the measure $\mu$ given
  by~\eqref{e:radon}.
\end{defn}

We show later on (Theorem~\ref{t:MAP}) that a strongly convergent
subsequence of $\{z^\delta\}_{\delta>0}$ exists and its limit, that we
prove to be in $E$, is a MAP estimator and also minimises the
Onsager-Machlup functional $I$.  Corollary~\ref{c:MAPmin} then shows
that any MAP estimator $\tilde z$ as given in Definition~\ref{d:MAP}
lives in $E$ as well, and minimisers of $I$ characterise all MAP
estimators of $\mu$.

One special case where it is easy to see that the MAP estimator is
unique is the case where $\Phi$ is linear, but we note that, in
general, the MAP estimator cannot be expected to be unique.  To
achieve uniqueness, stronger conditions on $\Phi$ would be required.

We first need to show that $I$ is the Onsager-Machlup
functional for our problem:

\begin{theorem}\label{t:OM}
  Let Assumption~\ref{a:asp1} hold.  Then the function $I$ defined
  by~\eqref{eq:I} is the Onsager-Machlup functional for~$\mu$,
  \textit{i.e.}\ for any $z_1,z_2\in E$ we have
  \begin{equation*}
    \lim_{\delta\to 0}\frac{J^\delta (z_1)}{J^\delta(z_2)}
    =\exp\bigl(I(z_2)-I(z_1)\bigr).
  \end{equation*}
\end{theorem}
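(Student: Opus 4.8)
The plan is to estimate the ratio $J^\delta(z_1)/J^\delta(z_2)$ by first separating the contribution of the potential $\Phi$ from the purely Gaussian contribution, and then using the Cameron--Martin formula to handle the shift between the two balls. Write $\mu_0^{z}$ for the measure $\mu_0(\cdot - z)$; by \eqref{eq:CM-intro} it has density $\exp(\langle z,u\rangle_E - \tfrac12\|z\|_E^2)$ with respect to $\mu_0$. First I would show, using Assumption~\ref{a:asp1}(iii), that for $z\in E$ and $u\in B^\delta(z)$ one has $|\Phi(u)-\Phi(z)|\le L(r)\delta$ where $r$ bounds $\|u\|_X$ uniformly in $\delta\le 1$; hence
\begin{equation*}
  \e^{-\Phi(z_i)-L\delta}\,\mu_0(B^\delta(z_i))
  \le J^\delta(z_i)
  \le \e^{-\Phi(z_i)+L\delta}\,\mu_0(B^\delta(z_i)),
\end{equation*}
so the $\Phi$-part of the limit contributes exactly $\e^{\Phi(z_2)-\Phi(z_1)}$ and it remains to prove
\begin{equation*}
  \lim_{\delta\to 0}\frac{\mu_0(B^\delta(z_1))}{\mu_0(B^\delta(z_2))}
  = \exp\Bigl(\tfrac12\|z_2\|_E^2 - \tfrac12\|z_1\|_E^2\Bigr).
\end{equation*}

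For the Gaussian ratio I would reduce to the case where one centre is the origin: it suffices to show $\mu_0(B^\delta(z))/\mu_0(B^\delta(0)) \to \exp(-\tfrac12\|z\|_E^2)$ for each $z\in E$, and then divide the two such statements. Translating the ball,
\begin{equation*}
  \mu_0(B^\delta(z)) = \mu_0^{z}(B^\delta(0)) - \!\!\int_{B^\delta(0)}\!\! \Bigl(1 - \e^{\langle z,u\rangle_E - \frac12\|z\|_E^2}\Bigr)\,\mu_0(\ud u),
\end{equation*}
wait—more cleanly, $\mu_0(B^\delta(z)) = \int_{B^\delta(0)} \e^{\langle z,u\rangle_E - \frac12\|z\|_E^2}\,\mu_0(\ud u)$ directly from the Cameron--Martin density. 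So the ratio equals $\e^{-\frac12\|z\|_E^2}\,\bbE_{\mu_0}[\e^{\langle z,u\rangle_E}\mid u\in B^\delta(0)]$, and I must show the conditional expectation tends to $1$. The term $\langle z,u\rangle_E$ is a centred Gaussian random variable under $\mu_0$ (an element of the first Wiener chaos) with variance $\|z\|_E^2$; it is \emph{not} continuous in $\|\cdot\|_X$, so one cannot simply bound it pointwise on $B^\delta(0)$. The standard device is to split $z$ into a component handled by a genuinely $X$-continuous linear functional and a remainder with small Cameron--Martin norm, or to use that the law of $\langle z,u\rangle_E$ conditioned on the small ball concentrates: precisely, one shows $\bbE_{\mu_0}[\langle z,u\rangle_E\mid u\in B^\delta(0)]\to 0$ and $\bbE_{\mu_0}[\langle z,u\rangle_E^2\mid u\in B^\delta(0)]\to 0$ as $\delta\to 0$, using symmetry of $\mu_0$ (so the conditional mean would vanish if $\langle z,\cdot\rangle_E$ were odd, which it is) together with a Cauchy--Schwarz/Anderson-inequality argument to control the second moment, and then Jensen or a uniform-integrability argument to pass from moment convergence to convergence of $\bbE[\e^{\langle z,u\rangle_E}\mid \cdot]$. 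The symmetry $u\mapsto -u$ of $\mu_0$ already gives $\bbE_{\mu_0}[\langle z,u\rangle_E\mid u\in B^\delta(0)]=0$ exactly, since $B^\delta(0)$ is symmetric, so the real work is the second-moment bound.

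I would carry out the steps in this order: (1) the Lipschitz reduction removing $\Phi$; (2) the Cameron--Martin rewriting of $\mu_0(B^\delta(z))$ as a conditional exponential moment times $\mu_0(B^\delta(0))$; (3) the observation that the conditional first moment of $\langle z,u\rangle_E$ vanishes by symmetry; (4) the key estimate that the conditional second moment of $\langle z,u\rangle_E$ on the shrinking ball tends to zero; (5) upgrading to convergence of the exponential moment, e.g.\ by writing $\e^{t}\le 1 + t + \tfrac12 t^2 \e^{|t|}$ and controlling the tail $\bbE_{\mu_0}[\langle z,u\rangle_E^2 \e^{|\langle z,u\rangle_E|}\mid u\in B^\delta(0)]$ via Hölder and Gaussian tail bounds; (6) assembling the pieces and dividing the two normalised limits to get $\exp(I(z_2)-I(z_1))$. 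The main obstacle is step (4): $\langle z,\cdot\rangle_E$ is only measurable, not $X$-continuous, so controlling its size on a small $X$-ball requires genuinely using Gaussian structure — either an approximation of $z$ by directions dual to $X^*$ for which the pairing \emph{is} continuous (and then a limiting argument in $\|z\|_E$), or a direct small-ball correlation estimate of the form $\bbE_{\mu_0}[\langle z,u\rangle_E^2\,\mathbf{1}_{B^\delta(0)}] \le C\delta^2 \mu_0(B^\delta(0))$ plus possibly a logarithmic correction, which is where the delicate part of the argument lives.
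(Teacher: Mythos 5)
Your treatment of the potential term is exactly the paper's: Assumption~\ref{a:asp1}(iii) gives $|\Phi(u)-\Phi(z_i)|\le L_i\delta$ on $B^\delta(z_i)$, the factors $\e^{\pm\delta(L_1+L_2)}$ disappear in the limit, and the whole theorem reduces to the purely Gaussian ratio statement \eqref{eq:need}. The difference lies in what you do with that statement: the paper does not prove it but imports it as the key estimate from Proposition~3 of Section~18 of \cite{Lif95}, whereas you attempt to prove it from the Cameron--Martin formula. Your rewriting $\mu_0(B^\delta(z)) = \e^{-\frac12\|z\|_E^2}\int_{B^\delta(0)}\e^{\langle z,u\rangle_E}\,\mu_0(\ud u)$ is correct (up to an irrelevant sign, by symmetry of $\mu_0$ and of the ball), and the vanishing of the conditional first moment by symmetry is correct and, with Jensen, already yields the lower bound; the upper bound is precisely where your argument has a genuine gap.

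Your step (4) is not established, and of the two routes you offer, one is unsound as stated. The proposed estimate $\bbE\bigl[\langle z,u\rangle_E^2\,\mathbf{1}_{B^\delta(0)}\bigr]\le C\delta^2\,\mu_0(B^\delta(0))$ is unjustified for general $z\in E$: the functional $\langle z,\cdot\rangle_E$ is only an $L^2(\mu_0)$-limit of elements of $X^*$, so no pointwise $O(\delta)$ bound on the ball is available, and neither Cauchy--Schwarz nor Anderson's inequality produces such a rate (Anderson only gives $\mu_0(B^\delta(z))\le\mu_0(B^\delta(0))$, i.e.\ ratio $\le 1$, not a conditional moment bound). What actually closes the scheme is a correlation inequality: writing $\langle z,\cdot\rangle_E=\ell_n+r_n$ with $\ell_n\in X^*$ and $\|r_n\|_{L^2(\mu_0)}\to 0$, the continuous part is $O(\delta)$ on the ball, and for the remainder one needs that conditioning on the symmetric convex set $B^\delta(0)$ does not increase the (exponential) moments of $|r_n|$; since the sets $\{|r_n|\le t\}$ are symmetric slabs, this is supplied by the Khatri--\v{S}id\'ak lemma, which also gives the uniform integrability required in your step (5). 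With that ingredient your plan can be completed and recovers exactly the estimate the paper cites; without it, steps (4)--(5) remain assertions. A minor omission: as in the paper, you should note that $J^\delta(z_2)>0$ and the normalisation constant is finite (Fernique plus positivity of Gaussian measure on balls centred in $E$), so that the ratio is well defined.
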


\begin{proof}
  Note that $J^\delta(z)$ is finite and positive for any $z \in E$ by
  Assumptions~\ref{a:asp1}(i),(ii) together with the Fernique Theorem
  and the positive mass of all balls in $X$, centred at points in $E$,
  under Gaussian measure \cite{Bog98}.  The key estimate in the proof
  is the following consequence of Proposition~3 in Section~18
  of~\cite{Lif95}:
  \begin{equation}\label{eq:need}
    \lim_{\delta\to 0}
    \frac{\mu_0\bigl( B^\delta(z_1) \bigr)}
    {\mu_0\bigl(B^\delta(z_2)\bigr)}
    = \exp\left( \frac12\|z_2\|_E^2-\frac12\|z_1\|_E^2 \right).
  \end{equation}
  This is the key estimate in the proof since it transfers questions
  about probability, naturally asked on the space $X$ of full measure
  under $\mu_0$, into statements concerning the Cameron-Martin norm of
  $\mu_0$, which is almost surely infinite under $\mu_0$.

  We have
  \begin{align*}
    \frac{J^\delta(z_1)}{J^\delta(z_2)}
    &=\frac{ \int_{B^\delta(z_1)}\exp (-\Phi(u))\,\mu_0(\ud u) }
    { \int_{B^\delta(z_2)}\exp (-\Phi(v))\,\mu_0(\ud v) } \\
    &=\frac{ \int_{B^\delta(z_1)}\exp (-\Phi(u)+\Phi(z_1))\exp(-\Phi(z_1))\,\mu_0(\ud u) }
    { \int_{B^\delta(z_2)}\exp (-\Phi(v)+\Phi(z_2))\exp (-\Phi(z_2))\,\mu_0(\ud v) }.
  \end{align*}
  By Assumption~\ref{a:asp1} (iii), for any $u,v\in X$
  \begin{equation*}
    -L\,\|u-v\|_X\,\le\,\Phi(u)-\Phi(v)\,\le\,L\,\|u-v\|_X
  \end{equation*}
  where $L=L(r)$ with $r>\max\{\|u\|_X,\|v\|_X\}$.  Therefore, setting
  $L_1=L(\|z_1\|_X+\delta)$ and $L_2=L(\|z_2\|_X+\delta)$, we can
  write
  \begin{align*}
    \frac{J^\delta(z_1)}{J^\delta(z_2)}
    &\le \e^{\delta(L_1+L_2)}\frac{ \int_{B^\delta(z_1)}\exp (-\Phi(z_1))\,\mu_0(\ud u) }
    { \int_{B^\delta(z_2)}\exp (-\Phi(z_2))\,\mu_0(\ud v) } \\
    &=\e^{\delta(L_1+L_2)}\e^{-\Phi(z_1)+\Phi(z_2)}
    \frac{  \int_{B^\delta(z_1)}\,\mu_0(\ud u) }
    { \int_{B^\delta(z_2)}\,\mu_0(\ud v) }.
  \end{align*}
  Now, by \eqref{eq:need}, we have
  \begin{equation*}
    \frac{J^\delta(z_1)}{J^\delta(z_2)}
    \le r_1(\delta)\,\e^{\delta (L_2+L_1)}\e^{-I(z_1)+I(z_2)}
  \end{equation*}
  with $r_1(\delta)\to 1$ as $\delta\to 0$. Thus
  \begin{equation}\label{e:Jlims}
    \limsup_{\delta\to 0}\frac{J^\delta(z_1)}{J^\delta(z_2)}
    \,\le\, \e^{-I(z_1)+I(z_2)}
  \end{equation}
  Similarly we obtain
  \begin{equation*}
    \frac{J^\delta(z_1)}{J^\delta(z_2)}
    \ge \frac{1}{r_2(\delta)}\,\e^{-\delta (L_2+L_1)}\e^{-I(z_1)+I(z_2)}
  \end{equation*}
   with $r_2(\delta)\to 1$ as $\delta\to 0$ and deduce that
  \begin {equation}\label{e:Jlimi}
    \liminf_{\delta\to 0}\frac{J^\delta(z_1)}{J^\delta(z_2)}
    \,\ge\, \e^{-I(z_1)+I(z_2)}
  \end{equation}
  Inequalities \eqref{e:Jlims} and \eqref{e:Jlimi} give the desired
  result.
\end{proof}

We note that similar methods of analysis show the following:

\begin{cor}
  Let the Assumptions of Theorem~\ref{t:OM} hold.  Then for any $z\in
  E$
  \begin{equation*}
    \lim_{\delta\to 0}\frac{J^\delta(z)}{\int_{B^{\delta}(0)}\mu_0(\ud u)}
    = \frac{1}{Z}\,\e^{-I(z)},
  \end{equation*}
  where $Z=\int_X \exp(-\Phi(u))\,\mu_0(\ud u)$.
\end{cor}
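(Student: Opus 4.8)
The plan is to mimic the proof of Theorem~\ref{t:OM} almost verbatim, replacing the reference point $z_2$ by the origin and the Gaussian ball-ratio estimate by a direct statement about $\mu(B^\delta(z))$ normalised by the Gaussian mass $\mu_0(B^\delta(0))$. First I would write
\begin{equation*}
  \frac{J^\delta(z)}{\int_{B^\delta(0)}\mu_0(\ud u)}
  = \frac{1}{Z}\cdot
  \frac{\int_{B^\delta(z)}\e^{-\Phi(u)}\,\mu_0(\ud u)}{\int_{B^\delta(0)}\mu_0(\ud u)},
\end{equation*}
using $Z=\int_X\e^{-\Phi(u)}\,\mu_0(\ud u)$, which is finite and positive by Assumption~\ref{a:asp1}(i),(ii) together with the Fernique Theorem, exactly as in Theorem~\ref{t:OM}. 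The factor $1/Z$ then sits outside all limits, so it suffices to show that the remaining ratio converges to $\e^{-I(z)}$.

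Next I would insert and extract the value $\Phi(z)$, writing the numerator as $\int_{B^\delta(z)}\e^{-\Phi(u)+\Phi(z)}\e^{-\Phi(z)}\,\mu_0(\ud u)$, and use Assumption~\ref{a:asp1}(iii) to bound $|\Phi(u)-\Phi(z)|\le L\delta$ on $B^\delta(z)$, where $L=L(\|z\|_X+\delta)$. This sandwiches the ratio between $\e^{-\delta L}\e^{-\Phi(z)}\,\mu_0(B^\delta(z))/\mu_0(B^\delta(0))$ and $\e^{\delta L}\e^{-\Phi(z)}\,\mu_0(B^\delta(z))/\mu_0(B^\delta(0))$. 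The only missing ingredient is then the Gaussian small-ball asymptotic
\begin{equation*}
  \lim_{\delta\to 0}\frac{\mu_0(B^\delta(z))}{\mu_0(B^\delta(0))}
  = \e^{-\frac12\|z\|_E^2},
\end{equation*}
which is precisely \eqref{eq:need} with $z_1=z$, $z_2=0$ (noting $\|0\|_E=0$), hence a direct consequence of Proposition~3 in Section~18 of~\cite{Lif95}. Combining the two bounds with $\delta L\to 0$ and this limit yields $\limsup$ and $\liminf$ both equal to $\e^{-\Phi(z)-\frac12\|z\|_E^2}=\e^{-I(z)}$, since $z\in E$.

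I do not expect any genuine obstacle here: the argument is a strict specialisation of Theorem~\ref{t:OM}, and the phrase ``similar methods of analysis'' in the text signals exactly this. The only point requiring a word of care is that the normalising denominator $\int_{B^\delta(0)}\mu_0(\ud u)$ is the \emph{Gaussian} mass of the centred ball, not $J^\delta(0)$; keeping the reference measure $\mu_0$ (rather than $\mu$) in the denominator is what makes the constant $1/Z$ appear, and one should not be tempted to replace it. Everything else — finiteness of $Z$, positivity of Gaussian ball masses centred at points of $E$, the locally Lipschitz bound, the small-ball ratio — is already available from the hypotheses and from the proof of Theorem~\ref{t:OM}.
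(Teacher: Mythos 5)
Your proposal is correct and follows essentially the same route as the paper: factor out $1/Z$, use the local Lipschitz bound on $B^\delta(z)$ to sandwich the integrand by $\e^{\pm\delta L}\e^{-\Phi(z)}$, and apply the Gaussian small-ball ratio \eqref{eq:need} with $z_2=0$. No gaps.
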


\begin{proof}
  Noting that we consider $\mu$ to be a probability measure and hence
  \begin{equation*}
    \frac{J^\delta(z)}{\int_{B^{\delta}(0)}\mu_0(\ud u)}
    =\frac{\frac{1}{Z}\int_{B^{\delta}(z)}\exp(-\Phi(u))\mu_0(\ud u)}{\int_{B^{\delta}(0)}\mu_0(\ud u)},
  \end{equation*}
  with $Z=\int_X \exp(-\Phi(u))\,\mu_0(\ud u)$, arguing along the
  lines of the proof of the above theorem gives
  \begin{equation*}
    \frac{1}{Z}\frac{1}{r(\delta)}\e^{-\delta \hat L}\e^{-I(z)}\le\frac{J^\delta(z)}{\int_{B^{\delta}(0)}\mu_0(\ud u)}\le \frac{1}{Z}r(\delta)\e^{\delta \hat L}\e^{-I(z)}
  \end{equation*}
  with $\hat L=L(\|z\|_X+\delta)$ (where $L(\cdot)$ is as in
  Definition~\ref{a:asp1}) and $r(\delta) \to 1$ as $\delta \to 0$.
  The result then follows by taking $\lim\sup$ and $\lim\inf$ as
  $\delta\to 0$.
\end{proof}

\begin{proposition}\label{p:prop}
  Suppose Assumptions~\ref{a:asp1} hold. Then the minimum of $I\colon
  E \to \bbR$ is attained for some element $z^{*} \in E$.
\end{proposition}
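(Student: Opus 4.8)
The plan is to use the direct method of the calculus of variations. First I would establish that $I$ is bounded below on $E$. For $u \in E$, combining Assumption~\ref{a:asp1}(i) with the continuous embedding $E \hookrightarrow X$ (so that $\|u\|_X \le C\|u\|_E$ for some constant $C$) gives, for any $\eps > 0$, a constant $M$ with $I(u) \ge M - \eps C^2 \|u\|_E^2 + \tfrac12 \|u\|_E^2$; choosing $\eps$ small enough that $\eps C^2 < \tfrac14$ yields $I(u) \ge M + \tfrac14\|u\|_E^2$, which is bounded below and moreover coercive in the $E$-norm. Hence $\inf_{E} I =: m > -\infty$, and any minimising sequence $\{u_n\} \subset E$ with $I(u_n) \to m$ is bounded in $E$.

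Next I would extract a weakly convergent subsequence. Since $E$, being a Cameron-Martin space of a Gaussian measure, is a separable Hilbert space, bounded sequences have weakly convergent subsequences; so along a subsequence (not relabelled) $u_n \rightharpoonup u^*$ weakly in $E$ for some $u^* \in E$. The two ingredients to finish are then: (a) the quadratic term $\tfrac12\|\cdot\|_E^2$ is weakly lower semicontinuous on the Hilbert space $E$, which is standard; and (b) $\Phi$ is weakly continuous along this sequence, i.e.\ $\Phi(u_n) \to \Phi(u^*)$. For (b), I would note that the embedding $E \hookrightarrow X$ is \emph{compact} — this is a general fact for Cameron-Martin embeddings of Gaussian measures on separable Banach spaces (the unit ball of $E$ is compact in $X$) — so weak convergence $u_n \rightharpoonup u^*$ in $E$ implies strong convergence $u_n \to u^*$ in $X$. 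Then Assumption~\ref{a:asp1}(iii), local Lipschitz continuity of $\Phi$ on $X$ (applicable since the $u_n$ are bounded in $X$), gives $|\Phi(u_n) - \Phi(u^*)| \le L\|u_n - u^*\|_X \to 0$.

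Putting these together: $I(u^*) = \Phi(u^*) + \tfrac12\|u^*\|_E^2 \le \lim_{n} \Phi(u_n) + \liminf_n \tfrac12\|u_n\|_E^2 = \liminf_n I(u_n) = m$, and since $u^* \in E$ we also have $I(u^*) \ge m$ by definition of the infimum. Hence $I(u^*) = m$ and the minimum is attained at $z^* = u^*$.

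The main obstacle — or rather the key structural fact the argument rests on — is the compactness of the embedding $E \hookrightarrow X$, which converts weak convergence in $E$ into strong convergence in $X$ and thereby lets the mere local Lipschitz (indeed mere continuity) hypothesis on $\Phi$ do the work of weak lower semicontinuity. Without compactness one would need $\Phi$ to be weakly lower semicontinuous on $E$ directly, which is a much stronger assumption than \ref{a:asp1}(iii). I would therefore make sure to cite the relevant result on compactness of Cameron-Martin embeddings (e.g.\ from \cite{Bog98} or \cite{Lif95}) explicitly at that step.
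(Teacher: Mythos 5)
Your proposal is correct and follows essentially the same route as the paper, which does not argue directly but cites Theorem 5.4 of \cite{St10} (and Theorem 2.7 of \cite{CDRS09} for non-negative $\Phi$), where existence is proved by exactly this direct method: coercivity of $I$ on $E$ from Assumption~\ref{a:asp1}(i) and the continuous embedding $E\hookrightarrow X$, weak compactness of bounded sets in the Hilbert space $E$, compactness of the Cameron--Martin embedding to upgrade to strong convergence in $X$, local Lipschitz continuity of $\Phi$, and weak lower semicontinuity of $\|\quark\|_E$. Your identification of the compact embedding as the key structural fact is consistent with the paper, which invokes the same property in the proof of Theorem~\ref{t:J}.
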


\begin{proof}
  The existence of a minimiser of $I$ in $E$, under the given
  assumptions, is proved as Theorem 5.4 in \cite{St10} (and as Theorem
  2.7 in \cite{CDRS09} in the case that $\Phi$ is non-negative).
\end{proof}

The rest of this section is devoted to a proof of the result that MAP
estimators can be characterised as minimisers of the Onsager-Machlup
functional~$I$ (Theorem~\ref{t:MAP} and Corollary~\ref{c:MAPmin}).

\begin{theorem}\label{t:MAP}
  Suppose that Assumptions~\ref{a:asp1} (ii) and (iii) hold. Assume
  also that there exists an $M\in\R$ such that $\Phi(u)\ge M$ for any
  $u\in X$.
  \begin{itemize}
  \item[i)] Let $z^\delta=\argmax_{z\in X}J^\delta(z)$.  There is a
    $\bar z\in E$ and a subsequence of $\{z^{\delta}\}_{\delta>0}$
    which converges to $\bz$ strongly in $X$.
  \item[ii)] The limit $\bz$ is a MAP estimator and a minimiser of $I$.
  \end{itemize}
\end{theorem}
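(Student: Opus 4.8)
The plan is to compare balls around the maximisers $z^\delta$ with balls around a fixed minimiser $z^*\in E$ of $I$ (which exists by Proposition~\ref{p:prop}), and then to extract a convergent subsequence using the fact that bounded subsets of the Cameron--Martin space $E$ are compact in $X$. Note first that $\Phi\ge M$ implies Assumption~\ref{a:asp1}(i), so the Corollary following Theorem~\ref{t:OM}, giving $J^\delta(z)/\mu_0(B^\delta(0))\to Z^{-1}\e^{-I(z)}$ for $z\in E$, is available. For $\delta<1$ and $z\in X$, Assumption~\ref{a:asp1}(iii) and $\Phi\ge M$ yield the elementary two-sided bound
\[
  \frac{\e^{-\Phi(z)-L\delta}}{Z}\,\mu_0\bigl(B^\delta(z)\bigr)\ \le\ J^\delta(z)\ \le\ \frac{\e^{-M}}{Z}\,\mu_0\bigl(B^\delta(z)\bigr),\qquad L=L(\|z\|_X+\delta).
\]
Applying the Corollary to $z^*$ gives $J^\delta(z^*)\ge\frac{1}{2Z}\e^{-I(z^*)}\mu_0(B^\delta(0))$ for $\delta$ small; since $J^\delta(z^\delta)\ge J^\delta(z^*)$ by definition of the argmax, combining with the upper bound above produces
\[
  \mu_0\bigl(B^\delta(z^\delta)\bigr)\ \ge\ c_0\,\mu_0\bigl(B^\delta(0)\bigr),\qquad c_0:=\tfrac12\e^{M-I(z^*)}>0,
\]
for all sufficiently small $\delta$.

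The step I expect to be the main obstacle is to deduce from this lower bound that $z^\delta\in E$ for $\delta$ small, with $\|z^\delta\|_E\le R$ for some fixed $R=R(c_0)$. This requires a small-ball estimate that is quantitative and sufficiently uniform in the shift $w$ --- morally a bound of the form $\mu_0(B^\delta(w))\le\exp\bigl(-\tfrac12\|w\|_E^2+\text{err}(\delta,w)\bigr)\mu_0(B^\delta(0))$ (with $\|w\|_E:=\infty$ for $w\notin E$, and $\text{err}\to0$ as $\delta\to0$ uniformly over $w$ in the relevant range; the model case $X=\bbR^d$, $\cC_0=I$ has $\text{err}=\delta\|w\|_E$) --- which is a genuine strengthening of~\eqref{eq:need}, the latter being only a pointwise-in-$w$ limit. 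The difficulty is precisely that $\mu_0(B^\delta(0))\to0$, so that fixed-scale tail bounds are useless and one really needs the scale-invariant comparison. Granting such an estimate, $c_0\le\exp(-\tfrac12\|z^\delta\|_E^2+\text{err})$ forces $\|z^\delta\|_E$ to be bounded because the quadratic term dominates the error once $\delta$ is small. Since $\{z:\|z\|_E\le R\}$ is compact in $X$, a subsequence $z^{\delta_n}$ converges in $X$ to some $\bz$; and, as the $X$-limit of a sequence with bounded $E$-norm, $\bz\in E$ with $\|\bz\|_E\le\liminf_n\|z^{\delta_n}\|_E$. This proves~(i). (The same estimate also secures existence of the maximiser $z^\delta$, using that $\mu_0$ charges no sphere in $X$, so that $J^\delta$ is upper semicontinuous and maximising sequences are $E$-bounded, hence precompact.)

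For~(ii), I would first show $\limsup_n I(z^{\delta_n})\le I(z^*)$ by squeezing. On $B^{\delta_n}(z^{\delta_n})$ one has $\Phi(u)\ge\Phi(z^{\delta_n})-L\delta_n$ with $L=L\bigl(\sup_n\|z^{\delta_n}\|_X+1\bigr)<\infty$, so $J^{\delta_n}(z^{\delta_n})\le\frac1Z\e^{-\Phi(z^{\delta_n})+L\delta_n}\mu_0(B^{\delta_n}(z^{\delta_n}))$; feeding in the quantitative upper small-ball bound together with $z^{\delta_n}\in E$ gives $J^{\delta_n}(z^{\delta_n})\le\frac1Z\e^{-I(z^{\delta_n})+o(1)}\mu_0(B^{\delta_n}(0))$, the error being $o(1)$ uniformly along the subsequence because $\|z^{\delta_n}\|_E$ is bounded. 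On the other hand the Corollary gives $J^{\delta_n}(z^{\delta_n})\ge J^{\delta_n}(z^*)=\frac1Z\e^{-I(z^*)}(1+o(1))\,\mu_0(B^{\delta_n}(0))$. Comparing yields $\limsup_n I(z^{\delta_n})\le I(z^*)$. Since $I$ is lower semicontinuous for $X$-convergence ($\Phi$ is $X$-continuous by Assumption~\ref{a:asp1}(iii), and $z\mapsto\|z\|_E^2$ is $X$-lower semicontinuous), $I(\bz)\le\liminf_n I(z^{\delta_n})\le I(z^*)=\min_E I$; as $\bz\in E$ this forces $I(\bz)=\min_E I$, so $\bz$ minimises $I$, and in particular $I(z^{\delta_n})\to\min I$.

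Finally, to identify $\bz$ as a MAP estimator in the sense of Definition~\ref{d:MAP}, the chain $J^\delta(z^*)\le J^\delta(z^\delta)\le\frac1Z\e^{-I(z^\delta)+o(1)}\mu_0(B^\delta(0))\le\frac1Z\e^{-\min I+o(1)}\mu_0(B^\delta(0))$ --- valid now for all small $\delta$ by the $E$-boundedness of $z^\delta$ from~(i) and $I(z^\delta)\ge\min I$ --- combined with the Corollary applied to $z^*$, gives $J^\delta(z^\delta)/\mu_0(B^\delta(0))\to Z^{-1}\e^{-\min I}$ as $\delta\to0$. Applying the Corollary to $\bz\in E$ gives $J^\delta(\bz)/\mu_0(B^\delta(0))\to Z^{-1}\e^{-I(\bz)}=Z^{-1}\e^{-\min I}$, and dividing the two limits yields $\lim_{\delta\to0}J^\delta(\bz)/J^\delta(z^\delta)=1$, which is exactly the MAP property.
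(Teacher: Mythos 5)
The step you yourself flag as the main obstacle is exactly where the proof fails to close, and it cannot be closed in the form you propose: everything hinges on a shift-uniform small-ball estimate $\mu_0\bigl(B^\delta(w)\bigr)\le\exp\bigl(-\tfrac12\|w\|_E^2+\mathrm{err}(\delta,w)\bigr)\mu_0\bigl(B^\delta(0)\bigr)$ with $\mathrm{err}\to0$ uniformly over the relevant $w$, which you grant without proof. In infinite dimensions this estimate is false. Take $\mu_0$ to be Wiener measure on $X=C([0,1])$ with the sup norm and $w_k(t)=\frac{\sqrt{2}\,R}{\pi k}\sin(\pi k t)$, so that $\|w_k\|_E=R$ for every $k$ while $\|w_k\|_X\to0$ as $k\to\infty$. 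Then $B^\delta(w_k)\supset B^{\delta-\|w_k\|_X}(0)$, so for each fixed $\delta>0$ the ratio $\mu_0(B^\delta(w_k))/\mu_0(B^\delta(0))\to1$ as $k\to\infty$, whereas your bound would force it below $\e^{-R^2/2+o(1)}$. Hence no error term controlled only by $\delta$ and $\|w\|_E$ can work: the finite-dimensional intuition $\mathrm{err}=\delta\|w\|_E$ does not survive, because the Paley--Wiener functional $\la w,\cdot\ra_E$ is not controlled by the $X$-norm on the ball. Since \eqref{eq:need} is only a pointwise-in-$w$ limit and the centres $z^\delta$ move with $\delta$, your deduction that $\|z^\delta\|_E\le R$ (indeed even that $z^\delta\in E$) is unsupported --- the paper never establishes this and does not need it --- and with it both the compactness argument in (i) and the squeeze $J^{\delta}(z^{\delta})\le\frac1Z \e^{-I(z^{\delta})+o(1)}\mu_0(B^{\delta}(0))$ in (ii) collapse.

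This is precisely where the paper's proof takes a more delicate route. Boundedness of $z^\delta$ is obtained only in $X$, via Lemma~\ref{l:limg0}, whose proof isolates a single (largest-variance) direction and is therefore genuinely uniform in the shift; one then extracts a weakly convergent subsequence in $X$ and treats the moving centres with the sequence-adapted Lemmas~\ref{l:limg0-nE} and~\ref{l:limg0-wc}, where the finite-dimensional projection is chosen according to the weak limit: these show that $\mu_0(B^\delta(z^\delta))/\mu_0(B^\delta(0))$ becomes arbitrarily small if the weak limit lies outside $E$ or if the convergence fails to be strong, contradicting the lower bound \eqref{e:zd-contra} (which is essentially the bound $\mu_0(B^\delta(z^\delta))\ge c_0\,\mu_0(B^\delta(0))$ you also derived). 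Membership $\bar z\in E$, the MAP property and minimality are then obtained by ratio comparisons of $J^\delta(z^\delta)$ with $J^\delta(\bar z)$ and $J^\delta(z^*)$, using the Cameron--Martin formula and a Lifshits Theorem~18.3-type argument in the case of strong $E$-convergence, never an $E$-bound on $z^\delta$ itself. Your remaining ingredients --- the comparison with $z^*$ via the Corollary to Theorem~\ref{t:OM}, the $X$-lower semicontinuity of $I$, and the final derivation of the MAP property from ratio limits --- are sound, but they cannot be assembled into a proof unless the missing estimate is replaced by an argument of the paper's type.
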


The proof of this theorem is based on several lemmas.  We state and
prove these lemmas first and defer the proof of Theorem~\ref{t:MAP} to
the end of the section where we also state and prove a corollary
characterising the MAP estimators as minimisers of Onsager-Machlup
functional.

\begin{lemma}\label{l:limg0}
  Let $\delta>0$. For any centred Gaussian measure $\mu_0$ on a
  separable Banach space $X$ we have
  \begin{equation*}
    \frac{J^\delta_0(z)}{J^\delta_0(0)}\le c\,\e^{-\frac{a_1}{2}(\|z\|_X-\delta)^2},
  \end{equation*}
  where $c=\exp(\frac{a_1}{2}\gd^2)$ and $a_1$ is a constant
  independent of $z$ and $\gd$.
\end{lemma}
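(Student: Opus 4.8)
The plan is to imitate the elementary finite-dimensional picture — on $B^\delta(z)$ the Gaussian density is bounded by its value at the point of that ball nearest the origin, on $B^\delta(0)$ it is bounded below by its value at distance $\delta$ from the origin, and the two Lebesgue volumes cancel — but, since in infinite dimensions there is neither a density nor a reference volume, to replace ``translate the ball towards the origin and compare densities'' by a Cameron--Martin shift followed by Anderson's inequality. Throughout, $J^\delta_0(z)=\mu_0\bigl(B^\delta(z)\bigr)$, and $J^\delta_0(0)>0$ since $0$ belongs to the support of the centred measure~$\mu_0$.

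First I would dispose of the range $\|z\|_X\le 2\delta$. By Anderson's inequality \cite{Bog98} applied to the symmetric convex set $B^\delta(0)$ one has $\mu_0\bigl(B^\delta(w)\bigr)\le\mu_0\bigl(B^\delta(0)\bigr)$ for every $w\in X$, so $J^\delta_0(z)/J^\delta_0(0)\le 1$; and when $\|z\|_X\le 2\delta$ we have $(\|z\|_X-\delta)^2\le\delta^2$, so the asserted bound $\exp\bigl(\tfrac{a_1}{2}\delta^2-\tfrac{a_1}{2}(\|z\|_X-\delta)^2\bigr)$ is already $\ge 1$ and there is nothing to prove. So assume henceforth $\|z\|_X>2\delta$, in particular $z\ne 0$.

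For such a $z$ I would use a supporting functional. Pick, by Hahn--Banach, $\ell\in X^*$ with $\|\ell\|_{X^*}=1$ and $\ell(z)=\|z\|_X$, and set $\sigma_\ell^2=\int_X\ell(v)^2\,\mu_0(\ud v)$. If $\sigma_\ell=0$ then $\mu_0$ is carried by $\ker\ell$, whereas $B^\delta(z)\subseteq\{v:\ell(v)>\|z\|_X-\delta>0\}$, so $J^\delta_0(z)=0$ and the bound is trivial; hence assume $\sigma_\ell>0$ and put $h_\ell:=\sigma_\ell^{-2}\,\cC_0\ell\in E$, which by the reproducing property of $\cC_0$ satisfies $\ell(h_\ell)=1$, $\la h_\ell,v\ra_E=\sigma_\ell^{-2}\ell(v)$ and $\|h_\ell\|_E^2=\sigma_\ell^{-2}$. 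The key point is that one cannot shift $\mu_0$ by $z$ itself (which need not lie in $E$), but one can shift by the Cameron--Martin vector $s h_\ell$; taking $s=\|z\|_X$ and applying the Cameron--Martin formula~\eqref{eq:CM-intro} gives
\begin{equation*}
  J^\delta_0(z)=\int_{B^\delta(z-sh_\ell)}\exp\Bigl(-\frac{s\,\ell(v)}{\sigma_\ell^2}-\frac{s^2}{2\sigma_\ell^2}\Bigr)\,\mu_0(\ud v).
\end{equation*}
On $B^\delta(z-sh_\ell)$ one has $\ell(z-sh_\ell)=\|z\|_X-s=0$ and $\|\ell\|_{X^*}=1$, hence $|\ell(v)|<\delta$; bounding the integrand by $\exp\bigl(s\delta/\sigma_\ell^2-s^2/(2\sigma_\ell^2)\bigr)$, estimating $\mu_0\bigl(B^\delta(z-sh_\ell)\bigr)\le J^\delta_0(0)$ by Anderson's inequality, and using $2s\delta-s^2=\delta^2-(\|z\|_X-\delta)^2$, I obtain
\begin{equation*}
  \frac{J^\delta_0(z)}{J^\delta_0(0)}\;\le\;\exp\!\Bigl(\frac{\delta^2-(\|z\|_X-\delta)^2}{2\sigma_\ell^2}\Bigr).
\end{equation*}

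It remains to make the constant uniform. By the Fernique theorem $\int_X\|v\|_X^2\,\mu_0(\ud v)<\infty$, so (assuming $\mu_0$ non-degenerate, the contrary case being immediate) $\sigma_*^2:=\sup_{\|\ell\|_{X^*}\le 1}\sigma_\ell^2\in(0,\infty)$ and $a_1:=\sigma_*^{-2}$ is a constant independent of $z$ and $\delta$; and since in the range $\|z\|_X>2\delta$ the numerator $\delta^2-(\|z\|_X-\delta)^2$ is negative while $\sigma_\ell^{-2}\ge a_1$, the last display is at most $\exp\bigl(\tfrac{a_1}{2}(\delta^2-(\|z\|_X-\delta)^2)\bigr)=c\,\e^{-\frac{a_1}{2}(\|z\|_X-\delta)^2}$ with $c=\e^{\frac{a_1}{2}\delta^2}$, as claimed. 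I expect the one real difficulty to be conceptual rather than computational: recognising that the shift should be along the Cameron--Martin direction $h_\ell$ dual to a supporting functional of $z$, and that the residual displacement $z-\|z\|_X h_\ell$, which in general does not lie in $E$, must be absorbed by Anderson's inequality. A minor secondary point is that $\sigma_\ell^2$ depends on $z$, which is why the range $\|z\|_X\le 2\delta$ is treated separately (by Anderson alone), so that $\sigma_\ell^2$ can afterwards be coarsened to the uniform $\sigma_*^2$ without spoiling the precise form of $c$.
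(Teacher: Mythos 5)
Your argument is correct, and it takes a genuinely different route from the paper. The paper first proves the bound for a finite-dimensional Gaussian by an explicit density computation in the covariance eigenbasis: it splits off the contribution of the lowest mode, giving the factor $\e^{-\frac12(a_1-\eps)(|z|-\delta)^2}$, and controls the remaining ratio of integrals by Anderson's inequality applied to a reweighted Gaussian; it then transfers the estimate to the Banach space by approximating $B^\delta(0)$ and $B^\delta(z)$ with cylindrical sets, using that $\mu_0$ is Radon, so that its $a_1$ is the reciprocal of the largest eigenvalue. You instead work directly in $X$: a Hahn--Banach norming functional $\ell$ for $z$ singles out one direction, the shift by the Cameron--Martin vector $\|z\|_X\,\sigma_\ell^{-2}\cC_0\ell$ is handled exactly by the Cameron--Martin formula (the measurable extension of $\la h_\ell,\cdot\ra_E$ is precisely $\sigma_\ell^{-2}\ell(\cdot)$, so the density is explicit), the linear term is bounded by $|\ell(v)|<\delta$ on the shifted ball, and the residual displacement $z-\|z\|_X h_\ell$, which need not lie in $E$, is absorbed by Anderson's inequality; your constant $a_1$ is the reciprocal of the weak variance $\sup_{\|\ell\|_{X^*}\le1}\int\ell(v)^2\,\mu_0(\ud v)$, finite by Fernique. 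Since the lemma only asserts existence of some $a_1$ independent of $z$ and $\delta$, either constant is acceptable, and your treatment of the edge cases ($\|z\|_X\le2\delta$ by Anderson alone, $\sigma_\ell=0$, and the point-mass prior) is sound. What each approach buys: yours avoids the somewhat delicate cylindrical-set approximation step entirely and gives a short, intrinsic proof with an explicit constant; the paper's finite-dimensional reduction is heavier here but sets up notation and projection arguments ($J^\gd_{0,n}$, $T_n$) that are reused verbatim in Lemmas~\ref{l:limg0-nE} and~\ref{l:limg0-wc}, which your method would not immediately replace.
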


\begin{proof}
  We first show that this is true for a centred Gaussian measure on
  $\R^n$ with the covariance matrix
  $C=\diag[\lambda_1, \ldots,\lambda_n]$ in basis $\{e_1, \ldots,e_n\}$,
  where $\lambda_1\ge \lambda_2\ge\dots\ge \lambda_n$.  Let
  $a_j=1/\lambda_j$, and $|z|^2=z_1^2+\cdots+z_n^2$.  Define
  \begin{equation}\label{e:defJ0n}
    J^\gd_{0,n}(z)
    := \int_{B^\gd(z)} \e^{-\frac{1}{2}(a_1x_1^2+\cdots+a_nx_n^2)}\,\ud x,
                    \quad\mbox{for any } z\in\R^n,
  \end{equation}
  and with $B^\gd(z)$ the ball of radius $\gd$ and centre $z$ in
  $\R^n$.  We have
  \begin{align*}
  \frac{J^\gd_{0,n}(z)}{J^\gd_{0,n}(0)}
  &=\frac{ \int_{B^\gd(z)} \e^{-\frac{1}{2}(a_1x_1^2+\cdots+a_nx_n^2)}\,\ud x }{ \int_{B^\gd(0)} \e^{-\frac{1}{2}\left(a_1x_1^2+\cdots+a_nx_n^2\right)}\,\ud x }\\
  &< \frac{\e^{-\frac{1}{2}(a_1-\eps)(|z|-\delta)^2}}{\e^{-\frac{1}{2}(a_1-\eps)\gd^2}}\frac{  \int_{B^\gd(z)} \e^{-\frac{1}{2}\left(\eps x_1^2+(a_2-a_1+\eps)x_2^2+\cdots+(a_n-a_1+\eps)x_n^2\right)}\,\ud x }
                          { \int_{B^\gd(0)} \e^{-\frac{1}{2}(\eps x_1^2+(a_2-a_1+\eps)x_2^2+\cdots+(a_n-a_1+\eps)x_n^2)}\,\ud x }\\
  &< c\, \e^{-\frac{1}{2}(a_1-\eps)(|z|-\delta)^2}\frac{  \int_{B^\gd(z)}\hat\mu_0(\ud x) }
                          { \int_{B^\gd(0)} \hat\mu_0(\ud x) },
  \end{align*}
  for any $\eps<a_1$ and where $\hat\mu_0$ is a centred Gaussian
  measure on $\R^n$ with the Covariance matrix
  $\diag[1/\eps,1/(a_2-a_1+\eps),\cdots,1/(a_n-a_1+\eps)]$ (noting
  that $a_n\ge a_{n-1}\ge \cdots\ge a_1$).  By Anderson's inequality for the infinite dimensional spaces
  (see Theorem 2.8.10 of \cite{Bog98}) we have
  $\hat\mu_0(B(z,\gd))\le \hat\mu_0(B(0,\gd))$ and therefore
  \begin{equation*}
    \frac{J^\gd_{0,n}(z)}{J^\gd_{0,n}(0)}< c\,\e^{-\frac{1}{2}(a_1-\eps)(|z|-\delta)^2}
  \end{equation*}
  and since $\eps$ is arbitrarily small the result follows for the
  finite-dimensional case.

  To show the result for an infinite dimensional separable Banach
  space $X$, we first note that $\{e_j\}_{j=1}^\infty$, the orthogonal
  basis in the Cameron-Martin space of $X$ for $\mu_0$, separates the
  points in $X$, therefore $T:u\to \{e_j(u)\}_{j=1}^\infty$ is an
  injective map from $X$ into $\R^\infty$. Let $u_j=e_j(u)$ and
  \begin{equation*}
    P_nu=(u_1,u_2,\cdots,u_n,0,0,\cdots).
  \end{equation*}
  Then, since $\mu_0$ is a Radon measure, for the balls $B(0,\delta)$
  and $B(z,\delta)$, for any $\eps_0>0$, there exists large
  enough $N$ such that the cylindrical sets
  $A_0=P_n^{-1}(P_n(B^\gd(0))$ and $A_z=P_n^{-1}(P_n(B^\gd(z))$
  satisfy $\mu_0(B^\gd(0)\bigtriangleup A_0)<\eps_0$ and
  $\mu_0(B^\gd(z)\bigtriangleup A_z)<\eps_0$ for $n>N$
  \cite{Bog98}, where $\bigtriangleup$ denotes the symmetric difference.
  Let $z_j=(z,e_j)$ and
  $z^n=(z_1,z_2,\cdots,z_n,0,\cdots)$ and for $0<\eps_1<\gd/2$,
  $n>N$ large enough so that $\|z-z^n\|_X\le \eps_1$. With
  $\alpha=c\,\e^{-\frac{a_1}{2}(\|z\|_X-\eps_1-\delta)^2}$ we
  have
  \begin{align*}
  J_0^\gd(z)
  &\le J_{0,n}^\gd(z^n)+\eps_0\\
  &\le \alpha J_{0,n}^\gd(0)+\eps_0\\
  &\le\alpha J_0^\gd(0)+(1+\alpha)\eps_0.
  \end{align*}
  Since $\eps_0$ and $\eps_1$ converge to zero as
  $n\to\infty$, the result follows.
\end{proof}

\begin{lemma}\label{l:limg0-nE}
  Suppose that $\bar z\not\in E$, $\{z^\gd\}_{\gd>0}\subset X$ and
  $z^\gd$ converges weakly to $z$ in $X$ as $\gd\to 0$.  Then for any
  $\eps>0$ there exists $\delta$ small enough such that
  \begin{equation*}
    \frac{J^\gd_0(z^\gd)}{J^\gd_0(0)}<\eps.
  \end{equation*}
\end{lemma}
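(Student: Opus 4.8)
The plan is to exploit the decay estimate just obtained in Lemma~\ref{l:limg0}, which asserts that $J^\gd_0(z^\gd)/J^\gd_0(0) \le c\,\e^{-\frac{a_1}{2}(\|z^\gd\|_X-\gd)^2}$; unfortunately this alone is not enough, since $\|z^\gd\|_X$ need only stay bounded (weak convergence in $X$ does not force the norm to blow up), so a bound in terms of $\|z^\gd\|_X$ cannot by itself give smallness. The resolution must come from the hypothesis $\bar z\notin E$, i.e.\ $\|\bar z\|_E=\infty$. So the first step is to replace the $X$-norm controlled estimate of Lemma~\ref{l:limg0} by a sharper, finite-dimensional estimate that brings in the Cameron-Martin norm of a projection. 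Concretely, fix $n$ and work with the first $n$ Cameron-Martin coordinates $u_j=e_j(u)$; arguing exactly as in the finite-dimensional part of the proof of Lemma~\ref{l:limg0}, but now ``completing the square'' against the shift $z^n=(z_1,\dots,z_n,0,\dots)$ rather than simply dropping the linear term, one gets
\begin{equation*}
  \frac{J^\gd_{0,n}(z^n)}{J^\gd_{0,n}(0)}
  \le c'\,\e^{-\frac12\sum_{j=1}^n a_j z_j^2 + C_n\gd}
  = c'\,\e^{-\frac12\|P_n z\|_E^2 + C_n\gd},
\end{equation*}
with constants depending on $n$ and on the first $n$ eigenvalues but not on $\gd$.

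Next I would combine this with the cylindrical-approximation step already used in Lemma~\ref{l:limg0}: for fixed $n$, as $\gd\to 0$ the cylinder sets $P_n^{-1}(P_n(B^\gd(z^\gd)))$ approximate $B^\gd(z^\gd)$ in $\mu_0$-measure up to an error that can be made small, and $P_n z^\gd \to P_n \bar z$ in $\R^n$ because weak convergence in $X$ implies convergence of each continuous linear functional $e_j$. Passing to the limit $\gd\to 0$ first (for $n$ fixed) yields
\begin{equation*}
  \limsup_{\gd\to 0}\frac{J^\gd_0(z^\gd)}{J^\gd_0(0)}
  \le c'\,\e^{-\frac12\|P_n \bar z\|_E^2}.
\end{equation*}
Finally, let $n\to\infty$: since $\bar z\notin E$, we have $\|P_n\bar z\|_E^2 = \sum_{j=1}^n a_j \bar z_j^2 \to \|\bar z\|_E^2 = +\infty$, so the right-hand side tends to $0$. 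Hence for any $\eps>0$ one first picks $n$ large enough that $c'\,\e^{-\frac12\|P_n\bar z\|_E^2}<\eps/2$, and then $\gd$ small enough (depending on that $n$) that the displayed $\limsup$ inequality holds with slack $\eps/2$; this gives $J^\gd_0(z^\gd)/J^\gd_0(0)<\eps$.

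The main obstacle is bookkeeping the two limits in the right order: the finite-dimensional estimate and the cylindrical approximation must both be carried out at fixed $n$ and their errors sent to zero as $\gd\to 0$ \emph{before} $n\to\infty$, because the constants $c'$, $C_n$ and the approximation threshold all degrade as $n$ grows. A second, more technical point is justifying the ``completed-square'' finite-dimensional inequality uniformly over the sequence $z^\gd$: one needs that $\|P_n z^\gd\|_X$ (hence $\sup_j|z^\gd_j|$ over $j\le n$) stays bounded along the sequence, which follows since weakly convergent sequences in $X$ are norm-bounded by the uniform boundedness principle, so the $\gd$-dependent error terms $C_n\gd$ really are controlled by a constant times $\gd$ with constant independent of $\gd$. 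Once these two points are handled, Anderson's inequality (as in Lemma~\ref{l:limg0}) disposes of the residual Gaussian ratio of balls.
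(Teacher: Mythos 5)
There is a genuine gap, and it lies in the order of the two limits $\gd\to 0$ and $n\to\infty$. Your completed-square estimate carries constants $c'$ and $C_n$ that blow up with $n$ (indeed $C_n\sim(\sum_{j\le n}a_j^2 z_j^2)^{1/2}$, which diverges precisely because $\bar z\notin E$), so you are forced to send $\gd\to 0$ at fixed $n$ before letting $n\to\infty$. But the cylindrical approximation cannot be run in that order: for a \emph{fixed} $\gd$ the sets $P_n^{-1}(P_n(B^\gd(z)))$ approximate $B^\gd(z)$ in $\mu_0$-measure as $n\to\infty$ (this is how it is used in Lemma~\ref{l:limg0}, where $\gd$ is fixed throughout), not ``for fixed $n$ as $\gd\to 0$'' as you state. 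Moreover the approximation error is an \emph{absolute} error $\eps_0$, which must be compared with the denominator $J_0^\gd(0)$; at fixed $\gd$ this is harmless because $J_0^\gd(0)>0$ is fixed and one takes $n$ large, but in your order $J_0^\gd(0)\to 0$ as $\gd\to 0$ while $n$ stays fixed, and the cylinder $P_n^{-1}(P_n(B^\gd(0)))$ overestimates $\mu_0(B^\gd(0))$ by an unbounded relative factor (it constrains only $n$ coordinates). Consequently your intermediate inequality $\limsup_{\gd\to 0}J_0^\gd(z^\gd)/J_0^\gd(0)\le c'\,\e^{-\frac12\|P_n\bar z\|_E^2}$ is not established by projecting numerator and denominator separately; the ratio does not pass to finite dimensions this way.

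The paper's proof avoids exactly this obstruction by never extracting the full projected Cameron--Martin norm. Given an arbitrary $A>0$, it uses $\bar z\notin E$ and the weak convergence $e_j(z^\gd)\to e_j(\bar z)$ to find a \emph{fixed} $N$ and $\tilde\gd$ with $\sum_{j=1}^N a_j x_j^2>A^2$ on the relevant balls; it then pulls out only the factor $\e^{-\frac14\sum_{j\le N}a_jx_j^2}\le\e^{-A^2/4}$ from the numerator and bounds the corresponding factor in the denominator below by $1/2$ for $\gd$ small depending on $N$ only, handling the remaining Gaussian ratio by Anderson's inequality. This yields the bound $2\e^{-A^2/4}$ \emph{uniformly in $n\ge N$}, which is what permits the transfer to the infinite-dimensional balls at fixed small $\gd$ by letting $n\to\infty$, exactly as in the last paragraph of Lemma~\ref{l:limg0}. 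Your underlying idea (use $\|P_n\bar z\|_E\to\infty$) is the same germ, but to repair your argument you would need either a finite-dimensional bound uniform in $n$ (as in the paper) or a genuine conditioning/factorisation argument controlling the ratio of ball measures by the ratio of their finite-dimensional marginals, which the separate cylinder approximations do not provide.
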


\begin{proof}
  Let $\cC$ be the covariance operator of $\mu_0$, and $\{
  e_j\}_{j\in\N}$ the eigenfunctions of $\cC$ scaled with respect to
  the inner product of $E$, the Cameron-Martin space of $\mu_0$, so
  that $\{e_j\}_{j\in\N}$ forms an orthonormal basis in $E$. Let
  $\{\lambda_j\}$ be the corresponding eigenvalues and $a_j=1/\lambda_j$.
  Since  $z^\gd$ converges weakly to $\bar z$ in $X$ as $\gd\to 0$,
  \begin{equation}\label{e:wcX}
  e_j(z^\gd)\to e_j(\bar z),\quad \mbox{for any }j\in\N
  \end{equation}
  and as $\bar z\not\in E$, for any $A>0$, there exists $N$
  sufficiently large and $\tilde\gd>0$ sufficiently small such that
  \begin{equation*}
    \inf_{z\in B^{\tilde\gd}(\bz)}\left\{\sum_{j=1}^N a_j x_j^2\right\}>A^2.
  \end{equation*}
  where $x_j= e_j(z)$. By (\ref{e:wcX}), for $\delta_1<\tilde\gd$
  small enough we have $B^{\gd_1}(z^{\gd_1})\subset
  B^{\tilde\gd}(\bz)$ and therefore
  \begin{equation}\label{e:lA}
    \inf_{z\in B^{\gd_1}(z^{\gd_1})}\left\{\sum_{j=1}^N a_j x_j^2\right\}>A^2.
  \end{equation}
  Let $T_n:X\to\R^n$ map $z$ to $(e_1(z), \ldots,e_n(z))$, and consider
  $J_{0,n}^\gd(z)$ to be defined as in (\ref{e:defJ0n}).  Having
  (\ref{e:lA}), and choosing $\gd\le \gd_1$ such that
  $\e^{-\frac{1}{4}(a_1+\dots+a_N)\gd^2}>1/2$, for any $n\ge N$ we can
  write
  \begin{align*}
  \frac{J^\gd_{0,n}(T_nz^\delta)}{J^\gd_{0,n}(0)}
  &=\frac{ \int_{B^\gd(T_nz^\gd)} \e^{-\frac{1}{2}(a_1x_1^2+\cdots+a_nx_n^2)}\,\ud x }{ \int_{B^\gd(0)} \e^{-\frac{1}{2}\left(a_1x_1^2+\cdots+a_nx_n^2\right)}\,\ud x }\\
  &\le \frac{ \int_{B^\gd(T_nz^\gd)}
         \e^{-\frac{1}{4}(a_1x_1^2+\cdots+a_Nx_N^2)}\e^{-\frac{1}{2}(\frac{a_1}{2}x_1^2+\cdots+\frac{a_N}{2}x_N^2+a_{N+1}x_{N+1}^2\cdots+a_nx_n^2)}\,\ud x }
         { \int_{B^\gd(0)} \e^{-\frac{1}{4}(a_1x_1^2+\cdots+a_Nx_N^2)}\e^{-\frac{1}{2}(\frac{a_1}{2}x_1^2+\cdots+\frac{a_N}{2}x_N^2+a_{N+1}x_{N+1}^2\cdots+a_nx_n^2)}\,\ud x }\\
  &\le \frac{ \e^{-\frac{1}{4}A^2}\int_{B^\gd(T_nz^\gd)} \e^{-\frac{1}{2}(\frac{a_1}{2}x_1^2+\cdots+\frac{a_N}{2}x_N^2+a_{N+1}x_{N+1}^2\cdots+a_nx_n^2)}\,\ud x }
         { \frac{1}{2}\int_{B^\gd(0)} \e^{-\frac{1}{2}(\frac{a_1}{2}x_1^2+\cdots+\frac{a_N}{2}x_N^2+a_{N+1}x_{N+1}^2\cdots+a_nx_n^2)}\,\ud x }\\
  &\le 2\e^{-\frac{1}{4}A^2}.
  \end{align*}
  As $A>0$ was arbitrary, the constant in the last line of the above
  equation can be made arbitrarily small, by making $\delta$
  sufficiently small and $n$ sufficiently large.  Having this and
  arguing in a similar way to the final paragraph of proof of
  Lemma~\ref{l:limg0}, the result follows.
\end{proof}

\begin{corollary}\label{c:limg0-nE}
Suppose that $z\not\in E$. Then
\begin{equation*}
  \lim_{\gd\to 0}\frac{J_0^\gd(z)}{J_0^\gd(0)}=0.
\end{equation*}
\end{corollary}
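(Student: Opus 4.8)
My plan is to obtain this as an immediate specialisation of Lemma~\ref{l:limg0-nE}. Given $z \notin E$, I would apply that lemma to the constant sequence $z^\delta := z$, $\delta > 0$, which converges weakly (indeed strongly) to $\bar z := z$ in $X$ as $\delta \to 0$; since $\bar z = z \notin E$, the hypotheses of the lemma are met. The lemma then tells us that for every $\eps > 0$ the quotient $J_0^\delta(z)/J_0^\delta(0) = J_0^\delta(z^\delta)/J_0^\delta(0)$ can be made less than $\eps$ by shrinking $\delta$, and since $\eps > 0$ is arbitrary and the quotient is non-negative this yields $\lim_{\delta\to 0} J_0^\delta(z)/J_0^\delta(0) = 0$, which is the assertion.

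The one point that deserves a moment's attention is that this should give an honest limit, i.e.\ that the bound $J_0^\delta(z)/J_0^\delta(0) < \eps$ holds for \emph{all} sufficiently small $\delta$ and not merely for a single value of $\delta$, so that one really gets convergence to $0$ rather than just an accumulation value. Inspecting the proof of Lemma~\ref{l:limg0-nE}, this is exactly what is established: there one first chooses $A$ with $2\e^{-\frac14 A^2} < \eps$, then extracts $N$, $\tilde\delta$ and $\delta_1$ from the weak convergence of the sequence, and the displayed chain of inequalities then holds for every $\delta \le \delta_1$ and $n \ge N$. Hence there is a genuine threshold $\delta_0 = \delta_0(\eps)$ below which the quotient is $< \eps$. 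I do not expect any real obstacle here: the corollary is precisely the case of a constant sequence in Lemma~\ref{l:limg0-nE}, which is why it is stated as a corollary rather than proved afresh.
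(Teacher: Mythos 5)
Your proposal is correct and is exactly how the paper intends the corollary to be read: it is the specialisation of Lemma~\ref{l:limg0-nE} to the constant sequence $z^\delta\equiv z$, which trivially converges weakly to $\bar z=z\notin E$. Your extra check that the lemma's proof actually yields the bound for \emph{all} sufficiently small $\delta$ (so that one gets a true limit, not just an accumulation value) is the right point to verify and is handled correctly.
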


\begin{lemma}\label{l:limg0-wc}
  Consider $\{z^\gd\}_{\gd>0}\subset X$ and suppose that $z^\gd$
  converges weakly and not strongly to $0$ in $X$ as $\gd\to 0$.  Then
  for any $\eps>0$ there exists $\delta$ small enough such that
  \begin{equation*}
    \frac{J^\gd_0(z^\gd)}{J^\gd_0(0)}<\eps.
  \end{equation*}
\end{lemma}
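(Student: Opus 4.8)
The plan is to adapt the finite-dimensional reduction used in the proofs of Lemmas~\ref{l:limg0} and~\ref{l:limg0-nE}, with the role played there by the hypothesis ``$\bar z\notin E$'' now taken over by a blow-up of the Cameron--Martin norm of $z^\gd$ that is forced by the failure of strong convergence. Since $z^\gd$ converges weakly but not strongly to $0$ in $X$, there are a constant $c>0$ and a sequence $\gd_k\downarrow 0$ with $\|z^{\gd_k}\|_X\ge 2c$ for all $k$; as the statement asks only for one small $\gd$ per $\eps$, it is enough to show $J^{\gd_k}_0(z^{\gd_k})/J^{\gd_k}_0(0)\to 0$ along this sequence, or along a subsequence of it.

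The first step is to upgrade non-strong convergence to a statement about the Cameron--Martin norm: along a further subsequence one has $\|z^{\gd_k}\|_E\to\infty$, where we use the convention $\|z\|_E=+\infty$ for $z\in X\setminus E$. Here I would invoke the standard fact that the closed unit ball of the Cameron--Martin space~$E$ is a compact subset of $X$ for any centred Gaussian Radon measure (see~\cite{Bog98}). Indeed, if $\|z^{\gd_k}\|_E$ stayed bounded along a subsequence, the corresponding $z^{\gd_k}$ would lie in a compact subset of $X$, so a further subsequence would converge strongly in $X$, necessarily to its weak limit~$0$ --- contradicting $\|z^{\gd_k}\|_X\ge 2c$. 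Relabel so that $\|z^{\gd_k}\|_E\to\infty$.

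The second step mirrors Lemma~\ref{l:limg0-nE}. With $e_j(\cdot)$ and $a_j=1/\lambda_j$ the coordinates and inverse eigenvalues as in that proof, weak convergence gives $e_j(z^{\gd_k})\to 0$ for each $j$, so $\sum_{j\le N}a_j e_j(z^{\gd_k})^2\to 0$ for every \emph{fixed} $N$; combined with $\sum_j a_j e_j(z^{\gd_k})^2=\|z^{\gd_k}\|_E^2\to\infty$, this forces the Cameron--Martin energy of $z^{\gd_k}$ into blocks of indices that grow with~$k$. Given $\eps>0$, fix $A$ with, say, $4\e^{-A^2/4}<\eps$; for each $k$ choose a block $\{1,\dots,N_k\}$ carrying at least $A^2$ of this energy (so possibly $N_k\to\infty$) and run the chain of inequalities from the proof of Lemma~\ref{l:limg0-nE}: split off part of the Gaussian weight on this block, bound the remaining ratio of finite-dimensional masses by~$1$ via Anderson's inequality (Theorem~2.8.10 of~\cite{Bog98}), and pass from $\R^n$ to $X$ by the cylindrical approximation of $B^{\gd_k}(0)$ and $B^{\gd_k}(z^{\gd_k})$ used in the final paragraphs of Lemmas~\ref{l:limg0} and~\ref{l:limg0-nE}, the approximation error being controlled uniformly in~$k$ because the balls shrink while the centres $\{z^{\gd_k}\}$ remain bounded. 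Taking $k$ large then yields $J^{\gd_k}_0(z^{\gd_k})/J^{\gd_k}_0(0)<\eps$, as required.

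The crux, and the point where more care is needed than in Lemma~\ref{l:limg0-nE}, is the quantitative coordination inside this second step. There, $\bar z\notin E$ produces a \emph{fixed} finite block carrying arbitrarily large energy; here the block $\{1,\dots,N_k\}$ capturing $A^2$ of energy may sit at high frequencies, so one must arrange simultaneously that $\gd_k$ is small relative to $a_1+\dots+a_{N_k}$ (so the split-off Gaussian factor on the block stays bounded below), that the infimum of the block energy over $B^{\gd_k}(z^{\gd_k})$ is still of order $A^2$, and that the cylindrical-approximation error stays below~$\eps$ --- in short, that the three scales $\gd_k$, $N_k$, and the resolution required for the approximation are compatible along the subsequence. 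Once this bookkeeping is in place (essentially, that the escaping Cameron--Martin energy cannot outrun the shrinking ball radius faster than the finite-dimensional estimate can absorb), the remaining manipulations are a routine repackaging of computations already carried out in Lemmas~\ref{l:limg0} and~\ref{l:limg0-nE}.
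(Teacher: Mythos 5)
Your first step (using compactness of the Cameron--Martin unit ball in $X$, \cite{Bog98}, to upgrade ``weak but not strong convergence'' to $\|z^{\gd_k}\|_E\to\infty$ along a subsequence) is correct, and reducing to a subsequence is indeed enough for the way the lemma is used. The genuine gap is exactly at the point you call the crux and then defer to ``bookkeeping'': with your choice of what to split off, that bookkeeping cannot be arranged. Your block $\{1,\ldots,N_k\}$ must sit wherever the Cameron--Martin energy of $z^{\gd_k}$ happens to live, and that location is dictated by the given sequence, not by you. Take for instance (in a Hilbert-space picture, with eigenbasis $f_j$ and $a_j=1/\lambda_j$) $z^{\gd_k}=\alpha f_{j_k}$ with $j_k$ growing so fast that $a_{j_k}\gd_k^2\to\infty$: this converges weakly and not strongly to $0$, and all of its energy sits at the single frequency $j_k$, so any block capturing energy $A^2$ must contain $j_k$; then the quantity you need small in the denominator step, $(a_1+\cdots+a_{N_k})\gd_k^2\ge a_{j_k}\gd_k^2$, blows up, the factor $\e^{-\frac14(a_1+\cdots+a_{N_k})\gd_k^2}$ is no longer bounded below by $\tfrac12$, and the chain of inequalities copied from Lemma~\ref{l:limg0-nE} does not close; passing to a further subsequence does not help, since the whole sequence has this structure. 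In other words, your parenthetical assumption that ``the escaping Cameron--Martin energy cannot outrun the shrinking ball radius'' is precisely what fails in general: in Lemma~\ref{l:limg0-nE} the block is \emph{fixed} (determined by $\bar z\notin E$) before $\gd$ is sent to $0$, and that order of quantifiers is what your adaptation loses.

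The paper's proof is organised so that this coordination problem never arises, and it does not use the $E$-norm blow-up at all. One fixes $N$ once and for all so that $a_j>A^2$ for $j>N$, and in the exponent one splits off only the \emph{constant} weight $A^2$ on the tail coordinates $x_{N+1},\ldots,x_n$ (not a fixed fraction of the weights $a_j$ on a moving block). The denominator penalty is then only $\e^{-\frac12 A^2\gd^2}$, harmless as $\gd\to0$ no matter how high the frequencies carrying the mass are, while the numerator gain $\e^{-\frac12 A^2(\frac{C\alpha}{2}-\gd^2)}$ comes from a lower bound on the \emph{unweighted} tail mass $\sum_{j=N+1}^{n}(e_j(z^\gd))^2$: since $\liminf_{\gd\to0}\|z^\gd\|_X>0$ while $e_j(z^\gd)\to0$ for the finitely many $j\le N$, an order-one amount of coordinate mass is pushed beyond the fixed cutoff $N$, where every $a_j$ exceeds $A^2$. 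Anderson's inequality and the cylindrical approximation then finish as in Lemmas~\ref{l:limg0} and~\ref{l:limg0-nE}. If you replace your ``block carrying $A^2$ of weighted energy'' device by this fixed-$N$, constant-weight split (and the unweighted tail-mass lower bound), the rest of your outline goes through; as written, the argument has a hole that is structural rather than a matter of routine scale-matching.
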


\begin{proof}
  Since $z^\gd$ converges weakly and not strongly to $0$, we have
  \begin{equation*}
    \liminf_{\delta\to 0}\|z^\delta\|_X>0
  \end{equation*}
  and therefore for $\gd_1$ small enough there exists $\alpha>0$ such
  that $\|z^\gd\|_X>\alpha$ for any $\gd<\gd_1$.  Let $\lambda_j$, $a_j$
  and $e_j$, $j\in\N$, be defined as in the proof of Lemma
  \ref{l:limg0-nE}.  Since $z^\gd\rightharpoonup 0$ as $\gd\to 0$,
  \begin{equation}\label{e:wcX0}
  e_j(z^\gd)\to 0,\quad \mbox{for any }j\in\N
  \end{equation}
  Also, as for $\mu_0$-almost every $x\in X$, $x=\sum_{j\in\N}
  e_j(x)\hat{e_j}$ and $\{\hat{e}_j=e_j/\sqrt{\lambda_j}\}$ is an
  orthonormal basis in $X^*_{\mu_0}$ (closure of $X^*$ in
  $L^2(\mu_0)$) \cite{Bog98}, we have
  \begin{equation}\label{e:L2b}
  \sum_{j\in\N} (e_j(x))^2<\infty\quad
  \mbox{for $\mu_0$-almost every $x\in X$}.
  \end{equation}
  Now, for any $A>0$, let $N$ large enough such that $a_N>A^2$. Then,
  having (\ref{e:wcX0}) and (\ref{e:L2b}), one can choose
  $\gd_2<\gd_1$ small enough and $N_1>N$ large enough so that for
  $\gd<\gd_2$ and $n>N_1$
  \begin{equation*}
    \sum_{j=1}^N(e_j(z^\delta))^2< \frac{C\alpha}{2},\quad
    \mbox{and}\quad \sum_{j=N+1}^n(e_j(z^\delta))^2> \frac{C\alpha}{2}.
  \end{equation*}
  Therefore, letting $J_{0,n}^\gd(z)$ and $T_n$ be defined as in the
  proof of Lemma~\ref{l:limg0-nE}, we can write
  \begin{align*}
  &\frac{J^\gd_{0,n}(T_nz^\delta)}{J^\gd_{0,n}(0)}\\
  &\quad=\frac{ \int_{B^\gd(T_nz^\gd)} \e^{-\frac{1}{2}(a_1x_1^2+\cdots+a_nx_n^2)}\,\ud x }{ \int_{B^\gd(0)} \e^{-\frac{1}{2}\left(a_1x_1^2+\cdots+a_nx_n^2\right)}\,\ud x }\\
  &\quad\le \frac{ \int_{B^\gd(T_nz^\gd)}
         \e^{-\frac{A^2}{2}(x_{N+1}^2+\cdots+x_n^2)}\e^{-\frac{1}{2}({a_1}x_1^2+\cdots+{a_N}x_N^2+(a_{N+1}-{A^2})x_{N+1}^2\cdots+(a_n-{A^2})x_n^2)}\,\ud x }
  { \int_{B^\gd(0)} \e^{-\frac{A^2}{2}(x_{N+1}^2+\cdots+x_n^2)}\e^{-\frac{1}{2}({a_1}x_1^2+\cdots+{a_N}x_N^2+(a_{N+1}-{A^2})x_{N+1}^2\cdots+(a_n-{A^2})x_n^2)}\,\ud x }
  \\
  &\quad\le \frac{ \e^{-\frac{1}{2}A^2(\frac{C\alpha}{2}-\gd^2)}\int_{B^\gd(T_nz^\gd)} \e^{-\frac{1}{2}(\frac{a_1}{2}x_1^2+\cdots+\frac{a_N}{2}x_N^2+a_{N+1}x_{N+1}^2\cdots+a_nx_n^2)}\,\ud x }
         { \e^{-\frac{1}{2}A^2\gd^2}\int_{B^\gd(0)} \e^{-\frac{1}{2}(\frac{a_1}{2}x_1^2+\cdots+\frac{a_N}{2}x_N^2+a_{N+1}x_{N+1}^2\cdots+a_nx_n^2)}\,\ud x }\\
  &\quad\le 2\e^{-\frac{C\alpha}{4}A^2},
  \end{align*}
  if $\gd<\gd_2$ is small enough so that $\e^{A\gd^2}<2$.  Having this
  and arguing in a similar way to the final paragraph of proof of
  Lemma~\ref{l:limg0}, the result follows.
\end{proof}

Having these preparations in place, we can give the proof of
Theorem~\ref{t:MAP}.

\begin{proof}({\em of Theorem~\ref{t:MAP}}) i) We first show
  $\{z^\delta\}$ is bounded in $X$.  By Assumption~\ref{a:asp1}.(ii)
  for any $r>0$ there exists $K=K(r)>0$ such that
  \begin{equation*}
    \Phi(u) \le K(r)
  \end{equation*}
  for any $u$ satisfying $\|u\|_X < r$; thus $K$ may be assumed to be
  a non-decreasing function of $r$.  This implies that
  \begin{equation*}
    \max_{z\in E}\int_{B^{\delta}(z)}\e^{-\Phi(u)}\,\mu_0(\ud u)
    \ge \int_{B^{\delta}(0)}\e^{-\Phi(u)}\,\mu_0(\ud u)
    \ge \e^{-K(\delta)} \int_{B^{\delta}(0)}\mu_0(\ud u).
  \end{equation*}
  We assume that $\delta\le 1$ and then the inequality above shows
  that
  \begin{equation}\label{e:zX1}
  \frac{J^\delta(z^\delta)}{\int_{B^{\delta}(0)}\mu_0(\ud u)}
  \ge \frac{1}{Z}\e^{-K(1)}=\eps_1
  \end{equation}
  noting that $\eps_1$ is independent of $\delta$.

  We also can write
  \begin{align*}
  ZJ^\delta(z)
  &=\int_{B^\delta(z)}\e^{-\Phi(u)}\mu_0(\ud u)\\
  &\le \e^{-M}\int_{B^\delta(z)}\mu_0(\ud u)\\
  &=: \e^{-M} J^\delta_0(z),
  \end{align*}
  which implies that for any $z\in X$ and $\delta>0$
  \begin{equation}\label{e:J0}
  J^\delta_0(z)\ge Z\e^M J^\delta(z)
  \end{equation}
  Now suppose $\{z^\delta\}$ is not bounded in $X$, so that for any
  $R>0$ there exists $\delta_R$ such that $\|z^{\gd_R}\|_X>R$ (with
  $\gd_R\to 0$ as $R\to\infty$).  By (\ref{e:J0}), (\ref{e:zX1}) and
  definition of $z^{\gd_R}$ we have
  \begin{equation*}
    J_0^{\gd_R}(z^{\gd_R})\ge Z\e^M J^{\gd_R}(z^{\gd_R})
    \ge Z\e^M J^{\gd_R}(0)
    \ge \e^{M}\e^{-K(1)}J_0^{\gd_R}(0)
  \end{equation*}
  implying that for any $\gd_R$ and corresponding $z^{\gd_R}$
  \begin{equation*}
    \frac{J_0^{\gd_R}(z^{\gd_R})}{J_0^{\gd_R}(0)}\ge c=\e^{M}\e^{-K(1)}.
  \end{equation*}
  This contradicts the result of Lemma~\ref{l:limg0} (below) for
  $\delta_R$ small enough.  Hence there exists $R,\delta_R>0$ such
  that
  \begin{equation*}
  \|z^\delta\|_X\le R,\quad\mbox{ for any }\delta<\delta_R.
  \end{equation*}
  Therefore there exists a $\bar z\in X$ and a subsequence of
  $\{z^\delta\}_{0<\delta<\delta_R}$ which converges weakly in $X$ to
  $\bar z\in X$ as $\delta\to 0$.


  Now, suppose either
  \begin{itemize}
  \item[a)] there is no strongly convergent subsequence of
    $\{z^\delta\}$ in $X$, or
  \item[b)] if there is one, its limit $\bar z$ is not in $E$.
  \end{itemize}

  Let $U_E=\{u\in E: \|u\|_E\le 1\}$. Each of the above situations
  imply that for any positive $A\in \R$, there is a $\delta^\dagger$
  such that for any $\delta\le \delta^\dagger$,
  \begin{equation}\label{e:fringe}
    B^\delta(z^\delta)\cap \left(B^\delta(0)+AU_E\right)=\emptyset.
  \end{equation}

  We first show that, $\bar z$ has to be in $E$.  By definition of
  $z^\delta$ we have (for $\delta<1$)
  \begin{equation}\label{e:zd-contra}
  1\le \frac{J^\delta(z^\delta)}{J^\delta(0)}\le \frac{\e^M}{\e^{-K(1)}}\,\frac{\int_{B^\delta(z^\delta)}\mu_0(\ud u)}{\int_{B^\delta(0)}\mu_0(\ud u)}
  \end{equation}
  Supposing $\bar z\not\in E$, in Lemma~\ref{l:limg0-nE} we show that
  for any $\eps>0$ there exists $\delta$ small enough such that
  \begin{equation*}
  \frac{\int_{B^\delta(z^\delta)}\mu_0(\ud u)}{\int_{B^\delta(0)}\mu_0(\ud u)}<\eps.
  \end{equation*}
  Hence choosing $A$ in (\ref{e:fringe}) such that
  $\e^{-A^2/2}<\frac{1}{2}\,\e^{K(1)}\e^{-M}$, and setting
  $\eps=\e^{-A^2/2}$, from (\ref{e:zd-contra}), we get $1\le
  J^\delta(z^\delta)/J^\delta(0)<1$ which is a contradiction. We
  therefore have $\bar z\in E$.

  Now, knowing that $\bz\in E$, we can show that the $z^\delta$
  converges strongly in $X$. Suppose not.  Then for $z^\gd-\bz$ the
  hypotheses of Lemma~\ref{l:limg0-wc} are satisfied. Again choosing
  $A$ in (\ref{e:fringe}) such that
  $\e^{-A^2/2}<\frac{1}{2}\,\e^{K(1)}\e^{-M}$, and setting
  $\eps=\e^{-A^2/2}$, from Lemma~\ref{l:limg0-wc} and
  (\ref{e:zd-contra}), we get $1\le J^\delta(z^\delta)/J^\delta(0)<1$
  which is a contradiction.  Hence there is a subsequence of
  $\{z^\delta\}$ converging strongly in $X$ to $\bar z\in E$.


  ii) Let $z^*=\argmin I(z) \in E$; existence is assured by
  Theorem~\ref{t:OM}.  By Assumption~\ref{a:asp1} (iii) we have
  \begin{equation*}
  \frac{J^\delta(z^\delta)}{J^\delta(\bar z)}
  \le \e^{-\Phi(z^\delta)+\Phi(\bar z)}\e^{(L_1+L_2)\delta}
     \frac{\int_{B^\delta(z^\delta)}\mu_0(\ud u)}{\int_{B^\delta(\bar z)}\mu_0(\ud u)}
  \end{equation*}
  with $L_1=L(\|z^\delta\|_X+\delta)$ and $L_2=L(\|\bar
  z\|_X+\delta)$. Therefore, since $\Phi$ is continuous on $X$ and
  $z^\delta\to \bar z$ in X,
  \begin{equation*}
  \limsup_{\delta\to 0}\frac{J^\delta(z^\delta)}{J^\delta(\bar z)}
  \le\limsup_{\delta\to 0} \frac{\int_{B^\delta(z^\delta)}\mu_0(\ud u)}{\int_{B^\delta(\bar z)}\mu_0(\ud u)}
  \end{equation*}
  Suppose $\{z^\delta\}$ is not bounded in $E$ or if it is, it only
  converges weakly (and not strongly) in $E$. Then $\|\bar z\|_E <
  \liminf_{\delta\to 0} \|z^\delta\|_E$ and hence for small enough
  $\delta$, $\|\bar z\|_E<\|z^\delta\|_E$. Therefore for the centered
  Gaussian measure $\mu_0$, since $\|z^\delta-\bar z\|_X\to 0$ we have
  \begin{equation*}
    \limsup_{\delta\to 0}\frac{\int_{B^\delta(z^\delta)}\mu_0(\ud u)}{\int_{B^\delta(\bar z)}\mu_0(\ud u)}\le 1.
  \end{equation*}
  This, since by definition of $z^\delta$, $J^\delta(z^\delta)\ge
  J^\delta(\bar z)$ and hence
  \begin{equation*}
    \liminf_{\delta\to 0}\bigl(J^\delta(z^\delta)/ J^\delta(\bar z)\bigr)\ge 1,
  \end{equation*}
  implies that
  \begin{equation}\label{e:bzsup}
    \lim_{\delta\to 0}\frac{J^\delta(z^\delta)}{J^\delta(\bar z)}=1.
  \end{equation}
  In the case where $\{z^\delta\}$ converges strongly to $\bar z$ in
  $E$, by the Cameron-Martin Theorem we have
  \begin{equation*}
    \frac{\int_{B^\delta(z^\delta)}\mu_0(\ud u)}{\int_{B^\delta(\bar z)}\mu_0(\ud u)}
    =\frac{ \e^{-\frac{1}{2}\|z^\delta\|_E^2} \int_{B^\delta(0)}\e^{\la z^\delta,u \ra_E}\mu_0(\ud u) }{ \e^{-\frac{1}{2}\|\bar z\|_E^2} \int_{B^\delta(0)}\e^{\la \bar z,u \ra_E}\mu_0(\ud u) }
  \end{equation*}
  and then by an argument very similar to the proof of Theorem~18.3 of
  \cite{Lif95} one can show that
  \begin{equation*}
    \lim_{\delta\to 0}\frac{\int_{B^\delta(z^\delta)}\mu_0(\ud u)}{\int_{B^\delta(\bar z)}\mu_0(\ud u)}=1
  \end{equation*}
  and (\ref{e:bzsup}) follows again in a similar way. Therefore $\bar
  z$ is a MAP estimator of measure $\mu$.

  It remains to show that $\bar z$ is a minimiser of $I$. Suppose
  $\bar z$ is not a minimiser of $I$ so that $I(\bar z)-I(z^*)>0$.
  Let $\delta_1$ be small enough so that in the equation before
  (\ref{e:Jlims}) $1<r_1(\delta)< \e^{I(\bar z)-I(z^*)}$ for any
  $\delta<\delta_1$ and therefore
  \begin{equation}\label{e:bzmin}
    \frac{J^\delta(\bar z)}{J^\delta(z^*)}
    \le r_1(\gd) \e^{-I(\bar z)+I(z^*)}<1.
  \end{equation}
  Let $\alpha=r_1(\gd) \e^{-I(\bar z)+I(z^*)}$. We have
  \begin{equation*}
    \frac{J^\delta(z^\delta)}{J^\delta(z^*)}
    =\frac{J^\delta(z^\delta)} {J^\delta(\bar z)} \frac{J^\delta(\bar z)} {J^\delta(z^*)}
  \end{equation*}
  and this by (\ref{e:bzmin}) and (\ref{e:bzsup}) implies that
  \begin{equation*}
    \limsup_{\delta\to 0} \frac{J^\delta(z^\delta)}{J^\delta(z^*)}
    \le \,\alpha\,\limsup_{\delta\to 0} \frac{J^\delta(z^\delta)}{J^\delta(\bar z)}< 1,
  \end{equation*}
  which is a contradiction, since by definition of $z^\delta$,
  $J^\delta(z^\delta)\ge J^\delta(z^*)$ for any $\delta>0$.
\end{proof}

\begin{corollary}\label{c:MAPmin}
  Under the conditions of Theorem~\ref{t:MAP} we have the following:
  \begin{itemize}
  \item[i)] Any MAP estimator, given by Definition~\ref{d:MAP},
    minimises the Onsager-Machlup functional $I$.
  \item[ii)] Any $z^*\in E$ which minimises the Onsager-Machlup
    functional $I$, is a MAP estimator for measure $\mu$ given by
    \eqref{e:radon}.
  \end{itemize}
\end{corollary}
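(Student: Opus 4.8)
The plan is to leverage Theorem~\ref{t:MAP} and Theorem~\ref{t:OM} together, exploiting the fact that the ratio $J^\delta(\tilde z)/J^\delta(z^\delta)$ characterising a MAP estimator can be decomposed through the distinguished point $\bar z$ produced in Theorem~\ref{t:MAP}. For part~(i), suppose $\tilde z$ is any MAP estimator in the sense of Definition~\ref{d:MAP}, so that $\lim_{\delta\to 0} J^\delta(\tilde z)/J^\delta(z^\delta)=1$. Since $\bar z$ is also a MAP estimator (Theorem~\ref{t:MAP}(ii)), we have $\lim_{\delta\to 0} J^\delta(\bar z)/J^\delta(z^\delta)=1$ as well, and combining these gives $\lim_{\delta\to 0} J^\delta(\tilde z)/J^\delta(\bar z)=1$. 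The first step is therefore to show that $\tilde z \in E$: if $\tilde z\notin E$ one runs the same argument as in the proof of Theorem~\ref{t:MAP}(i), using Lemma~\ref{l:limg0-nE} / Corollary~\ref{c:limg0-nE} (noting $\tilde z$ is a fixed point, a trivial weakly convergent sequence) together with the bound $ZJ^\delta(z)\le \e^{-M}J^\delta_0(z)$ and the lower bound \eqref{e:zX1}, to force $J^\delta(\tilde z)/J^\delta(z^\delta)\to 0$, contradicting the MAP property. Once $\tilde z\in E$, apply Theorem~\ref{t:OM} with $z_1=\tilde z$, $z_2=\bar z$ to obtain
\begin{equation*}
  1 = \lim_{\delta\to 0}\frac{J^\delta(\tilde z)}{J^\delta(\bar z)}
    = \exp\bigl(I(\bar z)-I(\tilde z)\bigr),
\end{equation*}
hence $I(\tilde z)=I(\bar z)$. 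Since Theorem~\ref{t:MAP}(ii) guarantees $\bar z$ minimises $I$, so does $\tilde z$.

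For part~(ii), let $z^*\in E$ be any minimiser of $I$. We must show $\lim_{\delta\to 0} J^\delta(z^*)/J^\delta(z^\delta)=1$. Again factor through $\bar z$: write $J^\delta(z^*)/J^\delta(z^\delta) = \bigl(J^\delta(z^*)/J^\delta(\bar z)\bigr)\cdot\bigl(J^\delta(\bar z)/J^\delta(z^\delta)\bigr)$. The second factor tends to $1$ by Theorem~\ref{t:MAP}(ii). For the first factor, since $z^*,\bar z\in E$, Theorem~\ref{t:OM} gives $\lim_{\delta\to 0} J^\delta(z^*)/J^\delta(\bar z) = \exp(I(\bar z)-I(z^*))$; but $I(\bar z)=\min I = I(z^*)$ (part~(i) already established $\bar z$ is a minimiser, or invoke Theorem~\ref{t:MAP}(ii) directly), so this factor is $1$ as well. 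Therefore $\lim_{\delta\to 0} J^\delta(z^*)/J^\delta(z^\delta)=1$, which is exactly the defining property of a MAP estimator in Definition~\ref{d:MAP}.

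The main obstacle is the first step of part~(i): ruling out MAP estimators outside the Cameron-Martin space $E$. Definition~\ref{d:MAP} a priori allows $\tilde z$ to be any point of $X$, and Theorem~\ref{t:OM} only provides the Onsager-Machlup identity for pairs in $E$, so one genuinely needs a separate argument — essentially a reprise of the contradiction argument in Theorem~\ref{t:MAP}(i) — to confine MAP estimators to $E$ before the clean Onsager-Machlup comparison can be applied. The key quantitative inputs are the two-sided bounds $Z\e^M J^\delta(z)\le J^\delta_0(z)$ and $J^\delta(z^\delta)\ge \eps_1 \int_{B^\delta(0)}\mu_0(\ud u)$ from the proof of Theorem~\ref{t:MAP}, which transfer the problem to the purely Gaussian quantity $J^\delta_0(\tilde z)/J^\delta_0(0)$; this then vanishes as $\delta\to 0$ by Corollary~\ref{c:limg0-nE} when $\tilde z\notin E$. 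Everything else is a routine multiplicative bookkeeping of ratios of ball masses, exactly as in the proofs already given.
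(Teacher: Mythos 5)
Your proposal is correct and follows essentially the same route as the paper: factor the MAP ratio through the limit point $\bar z$ from Theorem~\ref{t:MAP}, rule out $\tilde z\notin E$ via Corollary~\ref{c:limg0-nE} together with the comparison bounds $ZJ^\delta(z)\le \e^{-M}J_0^\delta(z)$ and \eqref{e:zX1}, and then invoke Theorem~\ref{t:OM} for the pair in $E$, with part~(ii) handled by exactly the paper's factorisation. The only cosmetic difference is that in part~(i) you conclude $I(\tilde z)=I(\bar z)$ directly from the two-sided Onsager-Machlup limit, whereas the paper argues by contradiction against a minimiser $z^*$ as in the last paragraph of the proof of Theorem~\ref{t:MAP}; the two are equivalent.
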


\begin{proof}
\begin{itemize}
\item[i)] Let $\tilde z$ be a MAP estimator. By Theorem~\ref{t:MAP} we
  know that $\{z^\gd\}$ has a subsequence which strongly converges in
  $X$ to $\bz$.  Let $\{z^\alpha\}$ be the said subsequence. Then by
  (\ref{e:bzsup}) one can show that
  \begin{equation*}
    \lim_{\delta\to 0}\frac{J^\delta(z^\delta)}{J^\delta(\bar z)}=\lim_{\alpha\to 0}\frac{J^\alpha(z^\alpha)}{J^\alpha(\bar z)}=1.
  \end{equation*}
  By the above equation and since $\tilde z$ is a MAP estimator, we can write
  \begin{equation*}
    \lim_{\delta\to 0}\frac{J^\delta(\tilde z)}{J^\delta(\bz)}
    = \lim_{\gd\to 0}\frac{J^\delta(z^\delta)}{J^\delta(\bz)}  \lim_{\gd\to 0}\frac{J^\delta(\tilde z)}{J^\delta(z^\delta)}
    =1.
  \end{equation*}
  Then Corollary~\ref{c:limg0-nE} implies that $\tilde z\in E$, and
  supposing that $\tilde z$ is not a minimiser of $I$ would result in
  a contradiction using an argument similar to last paragraph of the
  proof of the above theorem.

\item[ii)] Note that the assumptions of Theorem~\ref{t:MAP} imply
  those of Theorem~\ref{t:OM}. Since $\bz$ is a minimiser of $I$ as
  well, by Theorem~\ref{t:OM} we have
  \begin{equation*}
    \lim_{\gd\to 0}\frac{J^\gd (\bz)}{J^\gd(z^*)}=1.
  \end{equation*}
  Then we can write
  \begin{equation*}
    \lim_{\delta\to 0}\frac{J^\delta( z^*)}{J^\delta(z^\gd)}
    = \lim_{\gd\to 0}\frac{J^\delta(\bz)}{J^\delta(z^\delta)}
    = \lim_{\gd\to 0}\frac{J^\delta(z^*)}{J^\delta(\bz)}
    =1.
  \end{equation*}
  The result follows by Definition~\ref{d:MAP}.
\end{itemize}
\end{proof}


\section{Bayesian Inversion and Posterior Consistency}
\label{s:consistency}

The structure \eqref{e:radon}, where $\mu_0$ is Gaussian, arises in
the application of the Bayesian methodology to the solution of inverse
problems. In that context it is interesting to study {\em posterior
  consistency}: the idea that the posterior concentrates near the
truth which gave rise to the data, in the small noise or large sample size
limits; these two limits are intimately related and indeed there are
theorems that quantify this connection for certain linear inverse
problems \cite{BrLo96}.

In this section we describe the Bayesian
approach to nonlinear inverse problems, as outlined in the introduction.
We assume that the data is found from application of
$G$ to the truth $\utr$ with additional noise:
\begin{equation*}
  y=G(\utr)+\zeta.
\end{equation*}
The posterior distribution $\mu^y$ is then of the
form \eqref{e:radon1} and
in this case it is convenient to extend the Onsager-Machlup
functional $I$ to a mapping from $X \times \bbR^J$ to $\R$, defined as
\begin{equation*}
  I(u;y)
  = \Phi(u;y)+\frac{1}{2}\|u\|^2_E.
\end{equation*}

We study posterior consistency of MAP estimators in both the small noise and large sample size
limits.  The corresponding results are presented in
Theorems \ref{t:J_n} and~\ref{t:J},
respectively.  Specifically we characterize the sense in which the MAP
estimators concentrate on the truth underlying the data in the small
noise and large sample size limits.

\subsection{Large Sample Size Limit}

Let us denote the exact solution by $\utr$ and suppose that as data we
have the following $n$ random vectors
\begin{equation*}
  y_j
  = \cG(\utr)+\eta_j, \quad j=1, \ldots,n
\end{equation*}
with $y_j\in\R^K$ and $\eta_j\sim\cN(0,\cC_1)$ independent identically
distributed random variables.  Thus, in the general setting, we have
$J=nK$, $G(\cdot)=\bigl(\cG(\cdot),\cdots,\cG(\cdot)\bigr)$ and
$\Sigma$ a block diagonal matrix with $\cC_1$ in each block.  We have
$n$ independent observations each polluted by ${\cal O}(1)$ noise, and
we study the limit $n \to \infty$.  Corresponding to this set of data
and given the prior measure $\mu_0\sim\cN(0,\cC_0)$ we have the
following formula for the posterior measure on $u$:
\begin{equation*}
  \frac{\ud \mu^{y_1, \ldots,y_n}}{\ud \mu_0}(u)
  \propto\exp\left( -\frac{1}{2}\sum_{j=1}^n|y_j-\cG(u)|_{\cC_1}^2 \right).
\end{equation*}
Here, and in the following, we use the notation
$\left<\cdot,\cdot\right>_{\cC_1}=\left<\cC_1^{-1/2}\cdot,\cC_1^{-1/2}\cdot\right>$,
and $|\cdot|_{\cC_1}^2=\left<\cdot,\cdot\right>_{\cC_1}$: By Corollary
\ref{c:MAPmin} MAP estimators for this problem are minimisers of
\begin{equation}\label{eq:2I}
I_n:=\|u\|_E^2+\sum_{j=1}^n|y_j-\cG(u)|_{\cC_1}^2.
\end{equation}
Our interest is in studying properties of the limits of minimisers
$u_n$ of $I_n$, namely the MAP estimators corresponding to the
preceding family of posterior measures.  We have the following theorem
concerning the behaviour of $u_n$ when $n\to\infty$.

\begin{theorem}\label{t:J}
  Assume that $\cG\colon X\to\R^K$ is Lipschitz on bounded sets and
  $u^\dagger \in E$.  For every $n \in \bbN$, let $u_n\in E$ be a
  minimiser of $I_n$ given by \eqref{eq:2I}.  Then there exists a
  $u^*\in E$ and a subsequence of $\{u_n\}_{n \in \bbN}$ that
  converges weakly to $u^*$ in $E$, almost surely.  For any such $u^*$
  we have $\cG(u^*)=\cG(\utr)$.
\end{theorem}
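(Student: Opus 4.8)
The plan is to exploit the fact that $u_n$ minimises $I_n$ by comparing $I_n(u_n)$ with $I_n(u^\dagger)$, where $u^\dagger\in E$ is the truth. First I would write down the basic inequality $I_n(u_n)\le I_n(u^\dagger)$, which reads
\begin{equation*}
  \|u_n\|_E^2+\sum_{j=1}^n|y_j-\cG(u_n)|_{\cC_1}^2
  \,\le\, \|u^\dagger\|_E^2+\sum_{j=1}^n|\eta_j|_{\cC_1}^2,
\end{equation*}
using $y_j-\cG(u^\dagger)=\eta_j$. This immediately gives $\|u_n\|_E^2\le\|u^\dagger\|_E^2+\sum_{j=1}^n|\eta_j|_{\cC_1}^2$, which is not yet a uniform bound; the trick is to instead rearrange so that the noise terms partly cancel. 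Writing $y_j-\cG(u_n)=\eta_j+\bigl(\cG(u^\dagger)-\cG(u_n)\bigr)$ and expanding the square, the $|\eta_j|_{\cC_1}^2$ terms cancel and one is left with
\begin{equation*}
  \|u_n\|_E^2+\sum_{j=1}^n|\cG(u^\dagger)-\cG(u_n)|_{\cC_1}^2
  \,\le\, \|u^\dagger\|_E^2 + 2\sum_{j=1}^n\la \eta_j,\,\cG(u_n)-\cG(u^\dagger)\ra_{\cC_1}.
\end{equation*}
From the first term alone we get $\|u_n\|_E\le\|u^\dagger\|_E$ once we control the cross term, so the crux is to bound $\sum_{j=1}^n\la\eta_j,\cG(u_n)-\cG(u^\dagger)\ra_{\cC_1}$ almost surely.

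The cross term is where the probabilistic input enters and is the main obstacle. Since $u_n$ depends on all the $\eta_j$, this is not a sum of independent terms, so one cannot naively apply a law of large numbers. The strategy is to define $S_n=\frac1n\sum_{j=1}^n\eta_j$, which by the strong law of large numbers converges to $0$ in $\R^K$ almost surely; then the cross term equals $n\la S_n,\cG(u_n)-\cG(u^\dagger)\ra_{\cC_1}$. Combining with the displayed inequality and dividing by $n$ (or keeping track carefully), one obtains something like $\frac1n\|u_n\|_E^2 + \frac1n\sum_j|\cG(u^\dagger)-\cG(u_n)|_{\cC_1}^2 \le \frac1n\|u^\dagger\|_E^2 + 2\la S_n,\cG(u_n)-\cG(u^\dagger)\ra_{\cC_1}$. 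The right side is $o(1)$ times a factor depending on $|\cG(u_n)-\cG(u^\dagger)|$, and using Young's inequality $2\la S_n,w\ra_{\cC_1}\le |S_n|_{\cC_1}^2+|w|_{\cC_1}^2$ one can absorb the $|\cG(u_n)-\cG(u^\dagger)|_{\cC_1}^2$ term into the left side (it appears with coefficient $\frac1n\cdot n=1$ after summing, so this needs a little care — more precisely, absorb half of $\sum_j|\cG(u^\dagger)-\cG(u_n)|_{\cC_1}^2$ and bound the leftover by $n|S_n|_{\cC_1}^2$). This yields $\|u_n\|_E^2\le\|u^\dagger\|_E^2 + n|S_n|_{\cC_1}^2$ and $\frac1n\sum_j|\cG(u_n)-\cG(u^\dagger)|_{\cC_1}^2\le \frac1n\|u^\dagger\|_E^2 + |S_n|_{\cC_1}^2$. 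The quantity $n|S_n|_{\cC_1}^2 = \frac1n|\sum_j\eta_j|_{\cC_1}^2$ is, almost surely, $O(1)$ only in expectation; pointwise it is $\frac1n$ times a quantity that fluctuates like $n$, i.e. it is $O(1)$ in distribution but not a.s. bounded along the full sequence. The cleanest fix is to note that $\frac1n|\sum_{j=1}^n\eta_j|_{\cC_1}^2 = n|S_n|_{\cC_1}^2$ and, since $S_n\to0$ a.s., for a.e.\ realization $|S_n|_{\cC_1}\to 0$; but $n|S_n|_{\cC_1}^2$ still need not be bounded. I would therefore extract the bound differently: go back to $\|u_n\|_E^2 + \frac12\sum_j|\cG(u^\dagger)-\cG(u_n)|_{\cC_1}^2 \le \|u^\dagger\|_E^2 + \tfrac12 n|S_n|_{\cC_1}^2$; here the LHS first term gives $\|u_n\|_E^2\le \|u^\dagger\|_E^2+\tfrac12 n|S_n|_{\cC_1}^2$ which is still problematic. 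The resolution the authors presumably intend: one does \emph{not} need $\{u_n\}$ bounded in $E$ for all $n$ simultaneously from a single a.s.\ event via $n|S_n|^2$; rather, since we only seek a subsequence, use that along a subsequence $n_k$ with $n_k |S_{n_k}|_{\cC_1}^2$ bounded (which exists a.s., e.g.\ by choosing $n_k$ where $|S_{n}|_{\cC_1}^2\le 1/n$ infinitely often — or more robustly, use $\liminf_n n|S_n|_{\cC_1}^2<\infty$ a.s., which follows from $\bbE[n|S_n|^2]=\mathrm{tr}$-type constant by Fatou/Borel–Cantelli). On that subsequence $\{u_{n_k}\}$ is bounded in $E$, hence (Hilbert space) has a further weakly convergent subsequence $u_{n_{k_\ell}}\rightharpoonup u^*\in E$.

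Finally, to identify $u^*$: on the chosen subsequence we also have $\frac{1}{n_k}\sum_{j=1}^{n_k}|\cG(u^\dagger)-\cG(u_{n_k})|_{\cC_1}^2\le \frac{1}{n_k}\|u^\dagger\|_E^2 + |S_{n_k}|_{\cC_1}^2 \to 0$, but the summand is constant in $j$ (it does not depend on $j$!), so this says directly $|\cG(u^\dagger)-\cG(u_{n_k})|_{\cC_1}^2\le \frac{1}{n_k}\|u^\dagger\|_E^2 + |S_{n_k}|_{\cC_1}^2\to 0$, i.e.\ $\cG(u_{n_k})\to\cG(u^\dagger)$ in $\R^K$. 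Since weak convergence $u_{n_{k_\ell}}\rightharpoonup u^*$ in $E$ implies $u_{n_{k_\ell}}\to u^*$ weakly in $X$ (as $E\hookrightarrow X$ continuously, and in the relevant applications compactly — but even without compactness we can pass to a subsequence convergent weakly in $X$, and then $\cG$ being weakly continuous on bounded sets, or continuous under strong convergence after a compact embedding, gives $\cG(u_{n_{k_\ell}})\to\cG(u^*)$). Combining, $\cG(u^*)=\lim\cG(u_{n_{k_\ell}})=\cG(u^\dagger)$, which is the claim. The main obstacle, to reiterate, is handling the stochastic cross term $\sum_j\la\eta_j,\cG(u_n)-\cG(u^\dagger)\ra_{\cC_1}$ without independence — resolved by recognizing it as $n\la S_n,\cG(u_n)-\cG(u^\dagger)\ra_{\cC_1}$ with $S_n=\frac1n\sum\eta_j\to0$ a.s., plus the observation that in this repeated-observation setup the "misfit" summand is independent of $j$, so control of its average is control of the term itself.
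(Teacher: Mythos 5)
Your argument is correct, but it follows a genuinely different route from the paper. The paper never works pathwise with the noise average: it takes expectations in the basic inequality, uses independence to get $\bbE\|u_n\|_E^2\le\|\utr\|_E^2+2K$ and $\bbE|\cG(u_n)-\cG(\utr)|_{\cC_1}^2\to0$, then runs a diagonal argument over an orthonormal basis of $E$ to build a candidate $u^*$ from limits of the expectations $\bbE\la u_n,\phi_j\ra_E$, and finally upgrades convergence in probability to almost sure convergence along further (deterministic) subsequences, finishing exactly as you do via the compact embedding $E\subset X$ and continuity of $\cG$. You instead stay pathwise: writing the cross term as $2n\la S_n,\cG(u_n)-\cG(\utr)\ra_{\cC_1}$ with $S_n=\frac1n\sum_j\eta_j\to0$ a.s., Young's inequality gives $|\cG(u_n)-\cG(\utr)|_{\cC_1}^2\lesssim \frac1n\|\utr\|_E^2+|S_n|_{\cC_1}^2\to0$ a.s.\ \emph{along the whole sequence} (stronger than the paper's in-expectation statement), plus $\|u_n\|_E^2\le\|\utr\|_E^2+cn|S_n|_{\cC_1}^2$; your Fatou observation $\liminf_n n|S_n|_{\cC_1}^2<\infty$ a.s.\ then yields a realization-dependent subsequence bounded in $E$, hence weakly convergent, and the identification $\cG(u^*)=\cG(\utr)$ follows as in the paper. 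What each approach buys: yours is shorter, avoids the delicate passage from convergence of expectations to convergence in probability, and gives a.s.\ consistency of the forward map along the full sequence; the price is that both your subsequence and your limit $u^*$ depend on the realization, whereas the paper's construction is designed to produce a deterministic subsequence and a candidate $u^*$ defined through expectations. Two small cleanups: rely on the Fatou route (your alternative ``$|S_n|^2\le 1/n$ infinitely often'' would itself need proof), and drop the hedge about weak continuity of $\cG$ — it is not available under the hypotheses, but it is also unnecessary, since the embedding of the Cameron--Martin space $E$ into $X$ is compact in general, which is precisely the fact the paper invokes.
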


We describe some preliminary calculations useful in the proof of this
theorem, then give Lemma~\ref{l:J_n}, also useful in the proof, and
finally give the proof itself.

We first observe that, under the assumption that $\cG$ is Lipschitz on
bounded sets, Assumptions~\ref{a:asp1} hold for $\Phi$.  We note that
\begin{align*}
  I_n
  &=\|u\|_E^2+\sum_{j=1}^n|y_j-\cG(u)|_{\cC_1}^2\\
  &=\|u\|_E^2+n|\cG(\utr)-\cG(u)|_{\cC_1}^2
    +2\sum_{j=1}^n\langle \cG(\utr)-\cG(u), \cC_1^{-1} \eta_j\rangle.
\end{align*}
Hence
\begin{equation*}
  \argmin_u I_n=\argmin_u\left\{ \|u\|_E^2+n|\cG(\utr)-\cG(u)|_{\cC_1}^2
  +2\sum_{j=1}^n\langle \cG(\utr)-\cG(u), \cC_1^{-1} \eta_j\rangle  \right\}.
\end{equation*}
Define
\begin{equation*}
  J_n(u)=|\cG(\utr)-\cG(u)|_{\cC_1}^2
  +\frac{1}{n}\|u\|_E^2
  +\frac{2}{n}\sum_{j=1}^n \langle \cG(\utr)-\cG(u),
  \cC_1^{-1}\eta_j \rangle.
\end{equation*}
We have
\begin{equation*}
  \argmin_u I_n = \argmin_u J_n.
\end{equation*}

\begin{lem}\label{l:J_n}
  Assume that $\cG\colon X\to\R^K$ is Lipschitz on bounded sets.  Then
  for fixed $n\in\bbN$ and almost surely, there exists $u_n\in E$ such
  that
  \begin{equation*}
    J_n(u_n)
    = \inf_{u\in E}J_n(u).
  \end{equation*}
\end{lem}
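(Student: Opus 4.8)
The plan is to run the direct method of the calculus of variations on $J_n$, for a fixed realisation of the noise $\eta_1,\dots,\eta_n$, each lying in $\R^K$. Since each $\eta_j$ takes values in the finite-dimensional space $\R^K$, the argument is in fact deterministic once the noise is fixed, and holds for every — in particular for almost every — realisation, which is all the ``almost surely'' in the statement asks for. I note that there is also a one-line route: the text preceding the lemma already records that $\argmin_u J_n=\argmin_u I_n$ and that, when $\cG$ is Lipschitz on bounded sets, the data-dependent potential $\Phi(u;y_1,\dots,y_n)=\tfrac12\sum_{j=1}^n|y_j-\cG(u)|_{\cC_1}^2$ satisfies Assumption~\ref{a:asp1}; Proposition~\ref{p:prop} applied to this $\Phi$ then yields a minimiser $u_n\in E$ of $I(\cdot;y_1,\dots,y_n)=\tfrac12 I_n$, which is therefore also a minimiser of $J_n$. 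I will nonetheless spell out the underlying argument, since that is where the content lies.

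First I would establish coercivity and a lower bound. Writing $a(u)=|\cG(\utr)-\cG(u)|_{\cC_1}$ and $\bar\eta=\tfrac1n\sum_{j=1}^n|\eta_j|_{\cC_1}<\infty$, Cauchy--Schwarz in the $\cC_1$-inner product bounds the cross term of $J_n$ below by $-2a(u)\bar\eta$, so
\begin{align*}
  J_n(u) &\ge a(u)^2-2a(u)\bar\eta+\tfrac1n\|u\|_E^2\\
  &=(a(u)-\bar\eta)^2-\bar\eta^2+\tfrac1n\|u\|_E^2
  \ge\tfrac1n\|u\|_E^2-\bar\eta^2 .
\end{align*}
Hence $\inf_E J_n>-\infty$ and $J_n(u)\to+\infty$ as $\|u\|_E\to\infty$; in particular any minimising sequence $\{u_m\}\subset E$ is bounded in $E$.

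Next I would pass to the limit. Since $E$ is a separable Hilbert space, a subsequence of $\{u_m\}$ converges weakly in $E$ to some $u_n\in E$. The inclusion $E\hookrightarrow X$ is compact (a standard property of the Cameron--Martin space of a centred Gaussian measure on a separable Banach space, see \cite{Bog98}), so along a further subsequence $u_m\to u_n$ strongly in $X$; then $\sup_m\|u_m\|_X<\infty$ and the local Lipschitz property of $\cG$ gives $\cG(u_m)\to\cG(u_n)$ in $\R^K$. Consequently $a(u_m)^2$ and the cross term --- an affine, hence continuous, function of $\cG(u_m)$ --- converge to their values at $u_n$, while $\|\quark\|_E^2$ is weakly lower semicontinuous on $E$. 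Adding these up, $\liminf_m J_n(u_m)\ge J_n(u_n)$, and since $\{u_m\}$ is minimising this forces $J_n(u_n)=\inf_{u\in E}J_n(u)$.

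The only step that is not boilerplate is the compact embedding $E\hookrightarrow X$: it is precisely what upgrades weak convergence of the minimising sequence in $E$ to strong convergence in $X$, which is needed because $\cG$ is assumed only locally Lipschitz on $X$ — hence sequentially continuous along $X$-convergent, but not necessarily along $E$-weakly-convergent, sequences. Beyond that the proof is the standard coercivity-plus-weak-lower-semicontinuity template, and the measurability / ``almost sure'' wording is essentially vacuous because the noise lives in the finite-dimensional space $\R^K$.
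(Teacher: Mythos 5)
Your proposal is correct, and the ``one-line route'' you mention at the outset is in fact the paper's entire proof: the authors simply observe that, for a fixed realisation of the (finite-dimensional) noise the data are bounded, so the potential $\Phi(u)=\tfrac12\sum_{j}|y_j-\cG(u)|_{\cC_1}^2$ satisfies Assumption~\ref{a:asp1} when $\cG$ is Lipschitz on bounded sets, and then invoke $\argmin_u I_n=\argmin_u J_n$ together with Proposition~\ref{p:prop} (i.e.\ Theorem~5.4 of \cite{St10}). What you do differently is to make the argument self-contained by rerunning the direct method on $J_n$ itself: your coercivity bound via Cauchy--Schwarz and completing the square handles the signed cross term (which is the only reason $J_n$, unlike $I_n$ with its non-negative $\Phi$, is not trivially bounded below), and your use of the compactness of the Cameron--Martin embedding $E\hookrightarrow X$ to upgrade weak $E$-convergence to strong $X$-convergence, followed by local Lipschitz continuity of $\cG$ and weak lower semicontinuity of $\|\quark\|_E^2$, is exactly the mechanism hidden inside the cited existence theorem (and the same compact-embedding fact the paper later uses in the proof of Theorem~\ref{t:J}). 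So the paper's version buys brevity by delegating to a known result, while yours buys transparency and makes explicit where the hypotheses enter; your remark that the ``almost surely'' is harmless because the noise is finite-dimensional and the argument is deterministic for each fixed realisation matches the paper's reading as well.
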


\begin{proof}
  We first observe that, under the assumption that $\cG$ is Lipschitz
  on bounded sets and because for a given $n$ and fixed realisations
  $\eta_1, \ldots,\eta_n$ there exists an $r>0$ such that
  $\max\{|y_1|, \ldots,|y_n|\}<r$, Assumptions~\ref{a:asp1} hold for
  $\Phi$.  Since $\argmin_u I_n= \argmin_u J_n$ the result follows
  by Proposition~\ref{p:prop}.
\end{proof}

We may now prove the posterior consistency theorem.  From
\eqref{e:Eun_bd} onwards the proof is an adaptation of the proof of
Theorem 2 of \cite{BisH04}.
We note that, the assumptions on limiting behaviour of measurement
noise in \cite{BisH04} are stronger: property (9) of \cite{BisH04}
is not assumed here for our $J_n$.  On the other hand a frequentist
approach is used in \cite{BisH04}, while here since $J_n$ is coming
from a Bayesian approach, the norm in the regularisation term is
stronger (it is related to the Cameron-Martin space of the Gaussian
prior). That is why in our case asking what if $\utr$ is not in $E$
and only in $X$, is relevant and is answered in Corollary \ref{c:utrX}
below.

\begin{proof}\,\,{\em (of Theorem \ref{t:J})}
  By definition of $u_n$ we have
 \begin{equation*}
    |\cG(\utr)-\cG(u_n)|_{\cC_1}^2
    +\frac{1}{n}\|u_n\|_E^2
    +\frac{2}{n}\sum_{j=1}^n \langle \cG(\utr)-\cG(u_n),
\cC_1^{-1}\eta_j \rangle
    \,\le\,  \frac{1}{n}\|\utr\|_E^2.
  \end{equation*}
  Therefore
  \begin{equation*}
    |\cG(\utr)-\cG(u_n)|_{\cC_1}^2 +\frac{1}{n}\|u_n\|_E^2
    \,\le\,  \frac{1}{n}\|\utr\|_E^2
    +\frac{2}{n}\,|\cG(\utr)-\cG(u_n)|_{\cC_1}\,|\sum_{j=1}^n
\cC_1^{-1/2}\eta_j|.
  \end{equation*}
  Using Young's inequality (see Lemma 1.8 of \cite{book:Robinson2001}, for example) for the last term in the right-hand side we get
  \begin{equation*}
    \frac{1}{2}|\cG(\utr)-\cG(u_n)|_{\cC_1}^2 +\frac{1}{n}\|u_n\|_E^2
    \,\le\,  \frac{1}{n}\|\utr\|_E^2
    +\frac{2}{n^2}\,\bigl(\sum_{j=1}^n \cC_1^{-1/2}\eta_j\bigr)^2.
  \end{equation*}
  Taking expectation and noting that the $\{\eta_j\}$ are independent,
  we obtain
  \begin{equation*}
    \frac{1}{2}\bbE|\cG(\utr)-\cG(u_n)|_{\cC_1}^2 +\frac{1}{n}\bbE\|u_n\|_E^2
    \,\le\,  \frac{1}{n}\|\utr\|_E^2 +\frac{2K}{n}
  \end{equation*}
  where $K=\bbE|\cC_1^{-1/2}\eta_1|^2$.  This implies that
  \begin{equation}\label{e:EGlim}
    \bbE|\cG(\utr)-\cG(u_n)|_{\cC_1}^2\to 0\quad\mbox{as}\quad n\to\infty
  \end{equation}
  and
  \begin{equation}\label{e:Eun_bd}
    \bbE\|u_n\|_E^2\le \|\utr\|_E^2+2K.
  \end{equation}
  1) We first show using \eqref{e:Eun_bd} that there exist $u^*\in E$
  and a subsequence $\{u_{n_k(k)}\}_{k\in\bbN}$ of $\{u_n\}$ such that
  \begin{equation}\label{e:weakEu0}
    \bbE \la u_{n_k(k)},v\ra_E\to\bbE \la u^*,v\ra_E,\quad\mbox{for any }v\in E.
  \end{equation}
  Let $\{\phi_j\}_{j\in\bbN}$ be a complete orthonormal system for $E$. Then
  \begin{equation*}
    \bbE\la u_n,\phi_1 \ra_E
    \,\le\, \bbE\|u_n\|_E\,\|\phi_1\|_E
    \,\le\, \|\utr\|_E^2+2K.
  \end{equation*}
  Therefore there exists $\xi_1\in\R$ and a subsequence
  $\{u_{n_1(k)}\}_{k\in\bbN}$ of $\{u_n\}_{n\in\bbN}$, such that
  $\bbE\la u_{n_1(k)},\phi_1 \ra\to\xi_1$.  Now considering $\bbE\la
  u_{n_1(k)},\phi_2\ra$ and using the same argument we conclude that
  there exists $\xi_2\in\R$ and a subsequence
  $\{u_{n_2(k)}\}_{k\in\bbN}$ of $\{u_{n_1(k)}\}_{k\in\bbN}$ such that
  $\bbE\la u_{n_2(k)},\phi_2 \ra\to\xi_2$.  Continuing similarly we
  can show that there exist $\{\xi_j\}\in\R^\infty$ and
  $\{u_{n_1(k)}\}_{k\in\bbN}\supset \{u_{n_2(k)}\}_{k\in\bbN}
  \supset\dots\supset \{u_{n_j(k)}\}_{k\in\bbN}$ such that $\bbE\la
  u_{n_j(k)},\phi_j \ra\to\xi_j$ for any $j\in\bbN$ and as
  $k\to\infty$.  Therefore
  \begin{equation*}
    \bbE\la u_{n_k(k)},\phi_j \ra_E\to\xi_j,\quad\mbox{as }k\to\infty\mbox{ for any }j\in\bbN.
  \end{equation*}
  We need to show that $\{\xi_j\}\in \ell^2(\R)$. We have, for any $N\in \bbN$,
  \begin{equation*}
    \sum_{j=1}^N \xi_j^2
    \le \lim_{k\to\infty}\bbE \sum_{j=1}^N\la u_{n_k(k)},\phi_j \ra_E^2
    \le \limsup_{k\to\infty}\bbE\|u_{n_k(k)}\|_E^2\le \|\utr\|_E^2+2K.
  \end{equation*}
  Therefore $\{\xi_j\}\in \ell^2(\R)$ and $u^*:=\sum_{j=1}^\infty
  \xi_j\phi_j \in E$.  We can now write for any nonzero $v\in E$
  \begin{align*}
    \bbE\la u_{n_k(k)}&-u^*,v\ra_E
    =\bbE \sum_{j=1}^\infty \la v,\phi_j \ra_E\la u_{n_k(k)}-u^*,\phi_j\ra_E\\
    &\le N\|v\|_E\,\,\bbE \sup_{j\in\{1, \ldots,N\}}|\la u_{n_k(k)}-u^*,\phi_j\ra_E|
    +(\|\utr\|_E^2+2K)^{1/2}\sum_{j=N}^\infty |\la v,\phi_j \ra_E|
  \end{align*}
  Now for any fixed $\eps>0$ we choose $N$ large enough so that
  \begin{equation*}
    (\|\utr\|_E^2+2K)^{1/2}
    \sum_{j=N}^\infty |\la v,\phi_j \ra_E|< \frac12 \eps
  \end{equation*}
  and then $k$ large enough so that
  \begin{equation*}
  N\|v\|_E \, \bbE|\la u_{n_k(k)}-u^*,\phi_j\ra_E| < \frac12 \eps
    \quad\mbox{for any }1\le j\le N.
  \end{equation*}
  This demonstrates that $\bbE\la u_{n_k(k)}-u^*,v\ra_E\to 0$ as
  $k\to\infty$.

  2) Now we show almost sure existence of a convergent subsequence of
  $\{u_{n_k(k)}\}$.  By \eqref{e:EGlim} we have
  $|\cG(u_{n_k(k)})-\cG(\utr)|_{\cC_1}\to 0$ in probability as
  $k\to\infty$.  Therefore there exists a subsequence $\{u_{m(k)}\}$
  of $\{u_{n_k(k)}\}$ such that
  \begin{equation*}
    \cG(u_{m(k)})\to\cG(\utr)\quad\mbox{a.s. }\mbox{ as }k\to\infty.
  \end{equation*}
  Now by \eqref{e:weakEu0} we have $\la u_{m(k)}-u^*,v\ra_E\to 0$ in
  probability as $k\to\infty$ and hence there exists a subsequence
  $\{u_{\hat m(k)}\}$ of $\{u_{m(k)}\}$ such that $ u_{\hat m(k)}$ converges
  weakly to $u^*$ in $E$ almost surely as $k\to\infty$.
   Since $E$ is compactly embedded in $X$, this implies that $u_{\hat
    m(k)}\to u^*$ in $X$ almost surely as $k\to\infty$.  The result
  now follows by continuity of $\cG$.
\end{proof}

In the case that $\utr\in X$ (and not necessarily in $E$), we have the
following weaker result:

\begin{corollary}\label{c:utrX}
  Suppose that $\cG$ and $u_n$ satisfy the assumptions of Theorem
  \ref{t:J}, and that $\utr\in X$.  Then there exists a subsequence of
  $\{\cG(u_n)\}_{n\in\N}$ converging to $\cG(\utr)$ almost surely.
\end{corollary}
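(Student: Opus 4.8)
\emph{Sketch of proof (strategy).}
The obstruction to simply rerunning the proof of Theorem~\ref{t:J} is that its starting point, the comparison $I_n(u_n)\le I_n(\utr)$, is vacuous once $\utr\notin E$, since then $\|\utr\|_E^2=+\infty$. The plan is to carry out the \emph{same} energy estimate, but with $\utr$ replaced \textbf{inside the comparison inequality only} by a fixed approximant $w\in E$, and then to let the approximation improve. This uses two facts already available in the paper: the Cameron--Martin space $E$ is dense in $X$, and $\cG$ is Lipschitz on bounded sets (so $u_n\in E$ still exists by Lemma~\ref{l:J_n}, regardless of whether $\utr\in E$).

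Concretely, first I would fix $\eps\in(0,1]$ and choose $w=w_\eps\in E$ with $\|\utr-w\|_X<\eps$, so that automatically $\|w\|_X\le\|\utr\|_X+1$; let $L$ be a Lipschitz constant for $\cG$ on the ball $\{\|u\|_X\le\|\utr\|_X+1\}$, which contains both $\utr$ and $w$. Since $u_n$ minimises $I_n$ over $E$ we have $I_n(u_n)\le I_n(w)$. Substituting $y_j=\cG(\utr)+\eta_j$, expanding the squares on both sides, cancelling the common term $\sum_{j=1}^n|\cC_1^{-1/2}\eta_j|^2$, dividing by $n$, estimating the two cross terms by Cauchy--Schwarz and applying Young's inequality to absorb the one involving $u_n$, one arrives at
\begin{equation*}
  \frac1n\|u_n\|_E^2+\frac12\,|\cG(\utr)-\cG(u_n)|_{\cC_1}^2
  \le \frac1n\|w\|_E^2+|\cG(\utr)-\cG(w)|_{\cC_1}^2
  +\frac2n\,|\cG(\utr)-\cG(w)|_{\cC_1}\Bigl|\sum_{j=1}^n\cC_1^{-1/2}\eta_j\Bigr|
  +\frac{2}{n^2}\Bigl|\sum_{j=1}^n\cC_1^{-1/2}\eta_j\Bigr|^2 .
\end{equation*}
Taking expectations, using independence and zero mean of the $\eta_j$ to get $\bbE\bigl|\sum_{j=1}^n\cC_1^{-1/2}\eta_j\bigr|^2=nK$ with $K=\bbE|\cC_1^{-1/2}\eta_1|^2$, together with Jensen's inequality for the first moment, yields
\begin{equation*}
  \frac12\,\bbE|\cG(\utr)-\cG(u_n)|_{\cC_1}^2
  \le \frac1n\|w\|_E^2+|\cG(\utr)-\cG(w)|_{\cC_1}^2+\frac{2\sqrt K}{\sqrt n}\,|\cG(\utr)-\cG(w)|_{\cC_1}+\frac{2K}{n}.
\end{equation*}

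Letting $n\to\infty$ kills every term on the right except $|\cG(\utr)-\cG(w)|_{\cC_1}^2$, which by the choice of $w$ and the local Lipschitz bound is at most $\|\cC_1^{-1/2}\|^2L^2\eps^2$; since the left-hand $\limsup$ does not depend on $\eps$, sending $\eps\to0$ gives $\bbE|\cG(\utr)-\cG(u_n)|_{\cC_1}^2\to0$. Convergence in $L^1$ of these nonnegative random variables to $0$ implies convergence in probability, hence some subsequence converges to $0$ almost surely, i.e.\ $\cG(u_{n_k})\to\cG(\utr)$ a.s., which is the assertion.

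I do not expect any hard technical obstacle: once the comparator $\utr$ is swapped for $w\in E$, every line is the computation from the proof of Theorem~\ref{t:J}. The one conceptual point is precisely \emph{why} the conclusion must be weaker. The displayed bound only controls $\bbE\|u_n\|_E^2$ by a quantity of order $n\,|\cG(\utr)-\cG(w)|_{\cC_1}^2$, and this cannot be made $O(1)$ in $n$ when $\utr\notin E$ (pushing $w\to\utr$ sends $\|w\|_E\to\infty$), so weak compactness of $\{u_n\}$ in $E$ — the mechanism that produced a limit $u^*$ in Theorem~\ref{t:J} — is no longer available; one must therefore work with $\cG(u_n)$ directly and deduce the almost sure statement by passing to a subsequence of a sequence converging in probability.
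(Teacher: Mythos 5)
Your proposal is correct and follows essentially the same route as the paper's own proof: approximate $\utr$ by an element $v\in E$ using density of $E$ in $X$, compare $I_n(u_n)\le I_n(v)$, apply Young's inequality and take expectations using independence of the $\eta_j$, then let $n\to\infty$ followed by $\eps\to 0$ to get $\bbE|\cG(\utr)-\cG(u_n)|_{\cC_1}^2\to 0$, and finally pass from convergence in probability to an almost surely convergent subsequence. The only cosmetic difference is that you bound the noise cross term on the right-hand side via Cauchy--Schwarz/Jensen at the level of first moments, whereas the paper absorbs it with a second application of Young's inequality; both yield the same $O(\eps^2)+O(1/n)$ bound.
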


\begin{proof}
  For any $\eps>0$, by density of $E$ in $X$, there exists $v\in E$
  such that $\|\utr-v\|_X\le \eps$.  Then by definition of $u_n$ we
  can write
  \begin{align*}
      |\cG(\utr)-\cG(u_n)|_{\cC_1}^2
  &    +\frac{1}{n}\|u_n\|_E^2
      +\frac{2}{n}\sum_{j=1}^n \langle \cG(\utr)-\cG(u_n),\cC_1^{-1}\eta_j \rangle\\
  &    \,\le\,   |\cG(\utr)-\cG(v)|_{\cC_1}^2+\frac{1}{n}\|v\|_E^2+\frac{2}{n}\sum_{j=1}^n \langle \cG(\utr)-\cG(v),\cC_1^{-1}\eta_j \rangle.
  \end{align*}
  Therefore, dropping $\frac{1}{n}\|u_n\|_E^2$ in the left-hand
  side, and using Young's inequality we get
  \begin{equation*}
     \frac{1}{2} |\cG(\utr)-\cG(u_n)|_{\cC_1}^2
         \le\,  2|\cG(\utr)-\cG(v)|_{\cC_1}^2+\frac{1}{n}\|v\|_E^2+\frac{3}{n^2}\sum_{j=1}^n  |\cC_1^{-1/2}\eta_j|^2.
  \end{equation*}
  By local Lipschitz continuity of $\cG$,
  $|\cG(\utr)-\cG(v)|_{\cC_1}\le C\eps^2$, and therefore taking the
  expectations and noting the independence of $\{\eta_j\}$ we get
  \begin{equation*}
    \bbE |\cG(\utr)-\cG(u_n)|_{\cC_1}^2\le 4C\eps^2+\frac{2C_\eps}{n}+\frac{6K}{n},
  \end{equation*}
  implying that
  \begin{equation*}
    \limsup_{n\to\infty}\bbE|\cG(\utr)-\cG(u_n)|_{\cC_1}^2\le 4C\eps^2.
  \end{equation*}
  Since the $\lim\inf$ is obviously positive and $\eps$ was arbitrary,
  we have $\lim_{n\to\infty}\bbE|\cG(\utr)-\cG(u_n)|_{\cC_1}^2=0$.
  This implies that $|\cG(\utr)-\cG(u_n)|_{\cC_1}\to 0$ in
  probability. Therefore there exists a subsequence of $\{\cG(u_n)\}$
  which converges to $\cG(\utr)$ almost surely.
\end{proof}

\subsection{Small Noise Limit}
\label{ssec:sn}

Consider the case where as data we have the random vector
\begin{equation}\label{e:obs_del}
  y_n = \cG(\utr)+\frac{1}{n}\eta_n,
\end{equation}
for $n \in \bbN$ and with $\utr$ again as the true solution and
$\eta_j\sim \cN(0,\cC_1)$, $j\in\N$, Gaussian random vectors in
$\bbR^K$. Thus, in the preceding general setting, we have $G=\cG$ and
$J=K$.  Rather than having $n$ independent observations, we have an
observation noise scaled by small $\gamma=1/n$ converging to zero.
For this data and given the prior measure $\mu_0$ on $u$, we have the
following formula for the posterior measure:
\begin{equation*}
  \frac{\ud \mu^{y_n}}{\ud \mu_0}(u)
  \propto\exp\left( -\frac{n^2}{2}\left| y_n-\cG(u) \right|_{\cC_1}^2 \right).
\end{equation*}
By the result of the previous section, the MAP estimators for the
above measure are the minimisers of
\begin{equation}\label{eq:1I}
  I_n(u):=\|u\|_E^2+n^2|y_n-\cG(u)|_{\cC_1}^2.
\end{equation}
Our interest is in studying properties of the limits of minimisers of
$I_n$ as $n \to \infty$.  We have the following almost sure
convergence result.

\begin{theorem}\label{t:J_n}
  Assume that $\cG\colon X\to\R^K$ is Lipschitz on bounded sets, and
  $u^\dagger \in E$.  For every $n \in \bbN$, let $u_n\in E$ be a
  minimiser of $I_n(u)$ given by \eqref{eq:1I}.  Then there exists a
  $u^*\in E$ and a subsequence of $\{u_n\}_{n \in \bbN}$ that
  converges weakly to $u^*$ in $E$, almost surely.  For any such $u^*$
  we have $\cG(u^*)=\cG(\utr)$.
\end{theorem}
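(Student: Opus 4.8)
The plan is to follow the proof of Theorem~\ref{t:J} essentially line for line, since the statement is identical and only the energy estimate replacing \eqref{e:EGlim}--\eqref{e:Eun_bd} changes. First I would record the analogue of Lemma~\ref{l:J_n}: for each fixed $n$ and for almost every realisation of $\eta_n$ the data $y_n$ is a fixed vector in $\R^K$, so, $\cG$ being Lipschitz on bounded sets, the potential $\Phi(\cdot;y_n)=\frac{1}{2}n^2|y_n-\cG(\cdot)|_{\cC_1}^2$ satisfies Assumptions~\ref{a:asp1}; hence a minimiser $u_n\in E$ of $I_n$ in \eqref{eq:1I} exists almost surely by Proposition~\ref{p:prop}.

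For the key estimate I would exploit the special structure $y_n=\cG(\utr)+\frac{1}{n}\eta_n$. Testing $I_n$ at $\utr$ gives $I_n(\utr)=\|\utr\|_E^2+|\cC_1^{-1/2}\eta_n|^2$, while writing $y_n-\cG(u_n)=\bigl(\cG(\utr)-\cG(u_n)\bigr)+\frac{1}{n}\eta_n$ and squaring yields
\begin{equation*}
  n^2|y_n-\cG(u_n)|_{\cC_1}^2
  = n^2|\cG(\utr)-\cG(u_n)|_{\cC_1}^2
    + 2n\la \cG(\utr)-\cG(u_n),\cC_1^{-1}\eta_n\ra
    + |\cC_1^{-1/2}\eta_n|^2 .
\end{equation*}
Using $I_n(u_n)\le I_n(\utr)$, cancelling the term $|\cC_1^{-1/2}\eta_n|^2$, bounding the cross term by Cauchy--Schwarz and then applying Young's inequality to absorb $\frac{1}{2}n^2|\cG(\utr)-\cG(u_n)|_{\cC_1}^2$ into the left-hand side, and finally taking expectations with $K=\bbE|\cC_1^{-1/2}\eta_1|^2$, I expect to obtain
\begin{equation*}
  \bbE\|u_n\|_E^2\le \|\utr\|_E^2+2K
  \qquad\mbox{and}\qquad
  \bbE|\cG(\utr)-\cG(u_n)|_{\cC_1}^2\le \frac{2}{n^2}\bigl(\|\utr\|_E^2+2K\bigr)\to 0,
\end{equation*}
that is, the analogues of \eqref{e:Eun_bd} and \eqref{e:EGlim}, the second now even with rate $O(n^{-2})$ as $n\to\infty$, though only convergence to zero is used below.

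Given these two bounds, the remainder of the argument would be taken verbatim from parts~1) and~2) of the proof of Theorem~\ref{t:J}: the uniform bound on $\bbE\|u_n\|_E^2$ drives the diagonal extraction over a complete orthonormal system $\{\phi_j\}_{j\in\bbN}$ of $E$, producing $u^*=\sum_{j}\xi_j\phi_j\in E$ with $\{\xi_j\}\in\ell^2(\R)$ and a subsequence along which $\bbE\la u_{n_k(k)}-u^*,v\ra_E\to 0$ for every $v\in E$; then $\bbE|\cG(\utr)-\cG(u_n)|_{\cC_1}^2\to 0$ forces $|\cG(u_n)-\cG(\utr)|_{\cC_1}\to 0$ in probability, hence almost surely along a further subsequence, which in turn gives almost sure weak convergence $u_{\hat m(k)}\rightharpoonup u^*$ in $E$ along yet another subsequence; the compact embedding $E\hookrightarrow X$ upgrades this to $u_{\hat m(k)}\to u^*$ strongly in $X$ almost surely, and continuity of $\cG$ then yields $\cG(u^*)=\cG(\utr)$. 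The only step requiring real attention --- and the one I would flag as the main (though modest) obstacle --- is the energy estimate above: one must expand $n^2|y_n-\cG(u_n)|_{\cC_1}^2$ so that the deterministic-looking $|\cC_1^{-1/2}\eta_n|^2$ cancels exactly against $I_n(\utr)$, and split Young's inequality so that the surviving noise contribution stays $O(1)$ rather than $O(n)$ in expectation. Once this bookkeeping is done the scaling $n^2\cdot n^{-2}=1$ makes the small-noise case if anything cleaner than the large-sample case, and no probabilistic ingredient beyond $\bbE|\cC_1^{-1/2}\eta_n|^2=K$ enters --- in particular independence of the $\eta_n$ is not needed for this theorem.
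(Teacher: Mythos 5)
Your proposal is correct and follows the paper's own argument essentially step for step: the same quadratic expansion of $n^2|y_n-\cG(u_n)|_{\cC_1}^2$ (the paper merely rescales $I_n$ by $n^{-2}$ and drops the constant $|\cC_1^{-1/2}\eta_n|^2$ rather than cancelling it explicitly), the same Cauchy--Schwarz/Young step yielding the analogues of \eqref{e:EGlim} and \eqref{e:Eun_bd}, and then a verbatim reuse of parts 1) and 2) of the proof of Theorem~\ref{t:J}. Your side remarks --- the $O(n^{-2})$ rate and the fact that independence of the $\eta_n$ is not used here --- are accurate but not needed.
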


\begin{proof}
  The proof is very similar to that of Theorem~\ref{t:J} and
  so we only sketch differences.  We have
  \begin{align*}
  I_n
  &=\|u\|_E^2+n^2|y_n-\cG(u)|_{\cC_1}^2\\
  &=\|u\|_E^2+n^2|\cG(\utr)+\frac{1}{n}\eta_n-\cG(u)|_{\cC_1}^2\\
  &=\|u\|_E^2+n^2|\cG(\utr)-\cG(u)|_{\cC_1}^2+|\eta_n|_{\cC_1}^2
      +2\,n\left< \cG(\utr)-\cG(u),\eta_n \right>_{\cC_1}.
  \end{align*}
  Letting
  \begin{equation*}
    J_n(u)=\frac{1}{n^2}\|u\|_E^2+|\cG(\utr)-\cG(u)|_{\cC_1}^2
    +\frac{2}{n}\left< \cG(\utr)-\cG(u),\eta_n \right>_{\cC_1},
  \end{equation*}
  we hence have $\argmin_u I_n=\argmin_u J_n$.  For this $J_n$ the
  result of Lemma~\ref{l:J_n} holds true, using an argument similar to
  the large sample size case.  The result of Theorem~\ref{t:J_n} carries over
  as well. Indeed, by definition of $u_n$, we have
  \begin{equation*}
   |\cG(\utr)-\cG(u_n)|_{\cC_1}^2
    +\frac{1}{n^2}\|u_n\|_E^2
    +\frac{2}{n}\langle \cG(\utr)-\cG(u_n), \cC_1^{-1}\eta_n \rangle
    \,\le\,  \frac{1}{n^2}\|\utr\|_E^2.
  \end{equation*}
  Therefore
  \begin{equation*}
  |\cG(\utr)-\cG(u_n)|_{\cC_1}^2 +\frac{1}{n^2}\|u_n\|_E^2
    \,\le\,  \frac{1}{n^2}\|\utr\|_E^2
    +\frac{2}{n}\,|\cG(\utr)-\cG(u_n)|_{\cC_1}\,|\cC_1^{-1/2}\eta_n|.
  \end{equation*}
  Using Young's inequality for the last term in the right-hand side we get
  \begin{equation*}
    \frac{1}{2}|\cG(\utr)-\cG(u_n)|_{\cC_1}^2 +\frac{1}{n^2}\|u_n\|_E^2
    \,\le\,  \frac{1}{n^2}\|\utr\|_E^2
    +\frac{2}{n^2}\,|\cC_1^{-1/2}\eta_n|^2.
  \end{equation*}
  Taking expectation we obtain
  \begin{equation*}
    \bbE|\cG(\utr)-\cG(u_n)|_{\cC_1}^2 +\frac{1}{n^2}\bbE\|u_n\|_E^2
    \,\le\,  \frac{1}{n^2}\|\utr\|_E^2 +\frac{2K}{n^2}.
  \end{equation*}
  This implies that
  \begin{equation}\label{e:EGlim0}
    \bbE|\cG(\utr)-\cG(u_n)|_{\cC_1}^2\to 0\quad\mbox{as}\quad n\to\infty
  \end{equation}
  and
  \begin{equation}\label{e:Eun_bd0}
    \bbE\|u_n\|_E^2\le \|\utr\|_E^2+2K.
  \end{equation}
  Having (\ref{e:EGlim0}) and (\ref{e:Eun_bd0}), and with the same
  argument as the proof of Theorem~\ref{t:J}, it follows that there
  exists a $u^*\in E$ and a subsequence of $\{u_n\}$ that converges
  weakly to $u^*$ in $E$ almost surely, and for any such $u^*$ we have
  $\cG(u^*)=\cG(\utr)$.
\end{proof}

As in the large sample size case, here also if we have $\utr\in X$ and we do
not restrict the true solution to be in the Cameron-Martin space $E$,
one can prove, in a similar way to the argument of the proof of
Corollary~\ref{c:utrX}, the following weaker convergence result:

\begin{corollary}\label{c:utrX0}
  Suppose that $\cG$ and $u_n$ satisfy the assumptions of Theorem
  \ref{t:J_n}, and that $\utr\in X$.  Then there exists a subsequence
  of $\{\cG(u_n)\}_{n\in\N}$ converging to $\cG(\utr)$ almost surely.
\end{corollary}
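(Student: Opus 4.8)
The plan is to mimic the proof of Corollary~\ref{c:utrX}, replacing the large-sample average of the noise by the single scaled noise vector $\tfrac{1}{n}\eta_n$ appearing in \eqref{e:obs_del}. First I would fix $\eps>0$ and use the density of $E$ in $X$ to pick $v=v(\eps)\in E$ with $\|\utr-v\|_X\le\eps$. The almost sure existence of a minimiser $u_n\in E$ of $I_n$ (equivalently of the rescaled functional $J_n$ introduced in the proof of Theorem~\ref{t:J_n}) is already established there, so I may take $u_n$ as given.

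Next I would exploit the minimality $J_n(u_n)\le J_n(v)$, which after expanding and cancelling the term $|\eta_n|_{\cC_1}^2$ reads
\begin{equation*}
  |\cG(\utr)-\cG(u_n)|_{\cC_1}^2+\frac{1}{n^2}\|u_n\|_E^2+\frac{2}{n}\langle \cG(\utr)-\cG(u_n),\cC_1^{-1}\eta_n\rangle
  \le |\cG(\utr)-\cG(v)|_{\cC_1}^2+\frac{1}{n^2}\|v\|_E^2+\frac{2}{n}\langle \cG(\utr)-\cG(v),\cC_1^{-1}\eta_n\rangle.
\end{equation*}
The \emph{key point} that makes this work even when $\utr\notin E$ is that one simply discards the non-negative term $\tfrac{1}{n^2}\|u_n\|_E^2$ on the left, so no bound on $\|u_n\|_E$ is ever needed. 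Bounding both inner products by Cauchy--Schwarz and absorbing the cross terms with Young's inequality then yields, for a universal constant $C$,
\begin{equation*}
  \frac{1}{2}|\cG(\utr)-\cG(u_n)|_{\cC_1}^2\le 2|\cG(\utr)-\cG(v)|_{\cC_1}^2+\frac{1}{n^2}\|v\|_E^2+\frac{C}{n^2}|\cC_1^{-1/2}\eta_n|^2.
\end{equation*}
By local Lipschitz continuity of $\cG$ (applicable since $\utr$ and $v$ lie in a fixed bounded set), $|\cG(\utr)-\cG(v)|_{\cC_1}\le C'\eps$, so taking expectations and using $\bbE|\cC_1^{-1/2}\eta_n|^2=K$ gives a bound of the form
\begin{equation*}
  \bbE|\cG(\utr)-\cG(u_n)|_{\cC_1}^2\le 4(C')^2\eps^2+\frac{C_\eps}{n^2}+\frac{2CK}{n^2}.
\end{equation*}

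Finally I would let $n\to\infty$ to obtain $\limsup_{n\to\infty}\bbE|\cG(\utr)-\cG(u_n)|_{\cC_1}^2\le 4(C')^2\eps^2$, and since $\eps>0$ is arbitrary (and the $\liminf$ is non-negative) conclude $\bbE|\cG(\utr)-\cG(u_n)|_{\cC_1}^2\to 0$. This forces $|\cG(\utr)-\cG(u_n)|_{\cC_1}\to 0$ in probability, and extracting an almost surely convergent subsequence (exactly as in part~2 of the proof of Theorem~\ref{t:J}) gives the claim. I do not anticipate a genuine obstacle here: the only care needed is in the bookkeeping of the Young's-inequality constants so the coefficient of $|\cG(\utr)-\cG(u_n)|_{\cC_1}^2$ stays strictly positive on the left-hand side, and in noting that --- unlike the large-sample case, where an average $\tfrac1n\sum_{j=1}^n|\cC_1^{-1/2}\eta_j|^2$ with bounded expectation is handled --- here only the single term $\tfrac{1}{n^2}|\cC_1^{-1/2}\eta_n|^2$ appears, which makes the estimate strictly easier.
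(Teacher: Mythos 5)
Your proposal is correct and is essentially the paper's own argument: the paper proves Corollary~\ref{c:utrX0} precisely by adapting the proof of Corollary~\ref{c:utrX} to the small-noise functional, replacing the averaged noise by the single term $\frac{1}{n}\eta_n$, discarding $\frac{1}{n^2}\|u_n\|_E^2$, and using density of $E$ in $X$ with local Lipschitz continuity of $\cG$ before passing to convergence in probability and an almost surely convergent subsequence. Your bookkeeping of the Young's-inequality constants and the observation that the single-noise term makes the estimate easier are both consistent with the intended proof.
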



\section{Applications in Fluid Mechanics}
\label{sec:fm}

In this section we present an application of the methods presented
above to filtering and smoothing in fluid dynamics, which is relevant
to data assimilation applications in oceanography and meteorology.  We
link the MAP estimators introduced in this paper to the variational
methods used in applications \cite{ben}, and we demonstrate posterior
consistency in this context.

We consider the 2D Navier-Stokes equation on the torus $\TT^{2} :=
[-1,1) \times [-1,1)$ with periodic boundary conditions:
\begin{eqnarray*}
  \begin{array}{cccc}
    \pd_tv - \nu \Delta v + v \cdot \nabla v + \nabla p &=& f
      & \mbox{for all $(x, t) \in \TT^{2} \times (0, \infty)$,} \\
    \nabla \cdot v &=& 0
      & \mbox{for all $(x, t) \in \TT^{2} \times (0, \infty)$,} \\
    v &=& u
      & \mbox{for all $(x,t) \in \TT^{2} \times \{0\}$.}
  \end{array}
\end{eqnarray*}
Here $v \colon \TT^{2} \times (0, \infty) \to \R^{2}$ is a
time-dependent vector field representing the velocity, $p \colon
\TT^{2} \times (0,\infty) \to \R$ is a time-dependent scalar field
representing the pressure, $f \colon \TT^{2} \to \R^{2}$ is a vector
field representing the forcing (which we assume to be time-independent
for simplicity), and $\nu$ is the viscosity.  We are interested in the
inverse problem of determining the initial velocity field $u$ from
pointwise measurements of the velocity field at later times.  This is
a model for the situation in weather forecasting where observations of
the atmosphere are used to improve the initial condition used for
forecasting.  For simplicity we assume that the initial velocity field
is divergence-free and integrates to zero over $\TT^2$, noting that
this property will be preserved in time.

Define
\begin{equation*}
  \cH
  := \left\{ \mbox{trigonometric polynomials }
    u\colon \TT^2 \to {\mathbb R}^2\,\Bigl|\, \nabla \cdot u = 0, \,\int_{\TT^{2}} u(x) \, \rd x = 0 \right\}
\end{equation*}
and $H$ as the closure of $\cH$ with respect to the
$(L^{2}(\TT^{2}))^{2}$ norm. We define the map $P\colon (L^{2}(\TT^{2}))^{2}
\to H$ to be the Leray-Helmholtz orthogonal projector (see
\cite{book:Robinson2001}).  Given $k = (k_1, k_2)^{\mathrm{T}}$,
define $k^{\perp} := (k_2, -k_1)^{\mathrm{T}}$.  Then an orthonormal
basis for $H$ is given by $\psi_k \colon \R^{2} \to \R^{2}$, where
\begin{equation*}
  \psi_k (x) := \frac{k^{\perp}}{|k|} \exp\Bigl( \pi i k \cdot
  x \Bigr)
\end{equation*}
for $k \in \Z^{2} \setminus \{0\}$.  Thus for $u \in H$ we may write
\begin{equation*}
  u = \sum_{k \in \Z^{2} \setminus \{0\}} u_k(t) \psi_k(x)
\end{equation*}
where, since $u$ is a real-valued function, we have the reality
constraint $u_{-k} = - \bar{u}_k$.  Using the Fourier decomposition
of $u$, we define the fractional Sobolev spaces
\begin{equation*}
  H^{s}
  := \Bigl\{ u \in H \Bigm| \sum_{k \in \Z^{2} \setminus \{0\}} (\pi^{2}\abs{k}^{2})^{s}\abs{u_k}^{2} < \infty \Bigr\}
\end{equation*}
with the norm $\norm{u}_s:= \bigl(\sum_k (\pi^{2}\abs{k}^{2})^{s}
\abs{u_k}^{2}\bigr)^{1/2}$, where $s \in \R$.  If $A=-P\Delta$, the
Stokes' operator, then $H^s=D(A^{s/2})$.  We assume that $f \in H^s$
for some $s>0$.

Let $t_\ell=\ell h$, for $\ell=0, \ldots, L$, and define $v_\ell \in
\bbR^M$ be the set of pointwise values of the velocity field given by
$\{v(x_m,t_\ell)\}_{m \in \bbK}$ where $\bbK$ is some finite set of
point in $\TT^2$ with cardinality $M/2$.  Note that each $v_\ell$
depends on $u$ and we may define $\cG_\ell\colon H \to \bbR^{M}$ by
$\cG_\ell(u)=v_\ell$.  We let $\{ {\eta}_\ell \}_{\ell \in
  \{1,\ldots,L\}}$ be a set of random variables in $\bbR^{M}$ which
perturbs the points $\{v_\ell \}_{\ell \in \{1,\ldots,L\}}$ to
generate the observations $\{ y_\ell \}_{\ell \in \{1,\ldots,L\}}$ in
$\bbR^{M}$ given by
\begin{equation*}
  y_{\ell}
  := v_{\ell} + \gamma {\eta}_{\ell}, \quad \ell \in \{1,\ldots, L\}.
\end{equation*}
We let $y=\{y_\ell\}_{\ell=1}^L$, the accumulated data up to time
$T=Lh$, with similar notation for $\eta$, and define $\cG\colon H \to
\bbR^{ML}$ by $\cG(u)= \bigl(\cG_1(u), \ldots,\cG_L(u)\bigr)$.  We now
solve the inverse problem of finding $u$ from $y=\cG(u)+\gamma \eta$.
We assume that the prior distribution on $u$ is a Gaussian
$\mu_0=N(0,\cC_0)$, with the property that $\mu_0(H)=1$ and that the
observational noise $\{\eta_\ell\}_{\ell \in \{1,\ldots,L\}}$ is
i.i.d.\ in $\bbR^M$, independent of $u$, with $\eta_1$ distributed
according to a Gaussian measure $N(0, I)$.  If we define
\begin{equation*}
  \Phi(u)=\frac{1}{2\gamma^2}\sum_{j=1}^L |y_j-\cG_j(u)|^2
\end{equation*}
then under the preceding assumptions the Bayesian inverse problem for
the posterior measure $\mu^y$ for $u|y$ is well-defined and is
Lipschitz in $y$ with respect to the Hellinger metric (see
\cite{CDRS09}).  The Onsager-Machlup functional in this case is given
by
\begin{equation*}
  I_{\mathrm{NS}}(u)
  = \frac{1}{2}\|u\|_{\cC_0}^2+\Phi(u).
\end{equation*}
We are in the setting of subsection~\ref{ssec:sn}, with $\gamma=1/n$
and $K=ML$.  In the applied literature approaches to assimilating data
into mathematical models based on minimising $I_{\mathrm{NS}}$ are
known as {\em variational methods}, and sometimes as 4DVAR \cite{ben}.

\begin{figure}
  \begin{center}
    \includegraphics[width=0.8\textwidth,trim=0 125 0 140,clip]{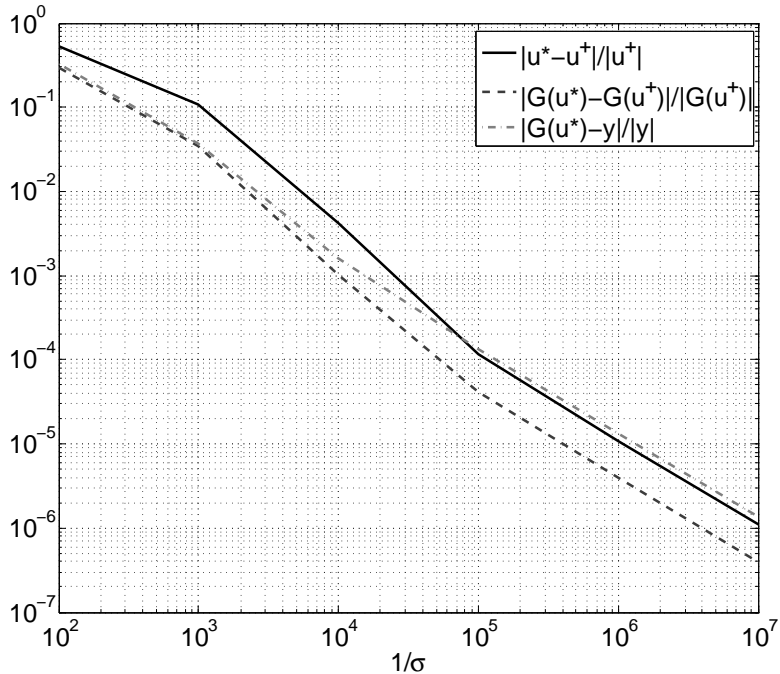}
  \end{center}
  \caption{\label{fig:post_cons}Illustration of posterior consistency in
    the fluid mechanics application.  The three curves given are the
    relative error of the MAP estimator $u^*$ in reproducing the
    truth, $u^{\dagger}$ (solid), the relative error of the map
    $\cG(u^*)$ in reproducing $\cG(u^{\dagger})$ (dashed), and the
    relative error of $\cG(u^*)$ with respect to the observations $y$
    (dash-dotted).}
\end{figure}

We now describe numerical experiments concerned with studying
posterior consistency in the case $\gamma \to 0$.  We let $\cC_0 =
A^{-2}$ noting that if $u \sim \mu_0$, then $u \in H^s$ almost surely
for all $s<1$; in particular $u \in H$.  Thus $\mu_0(H)=1$ as
required.  The forcing in $f$ is taken to be $f=\nabla^{\perp}\Psi$,
where $\Psi=\cos(\pi k \cdot x)$ and $\nabla^{\perp}=J\nabla$ with $J$
the canonical skew-symmetric matrix, and $k=(5,5)$.  The dimension of
the attractor is determined by the viscosity parameter $\nu$.  For the
particular forcing used there is an explicit steady state for all
$\nu>0$ and for $\nu \geq 0.035$ this solution is stable (see
\cite{majda2006non}, Chapter 2 for details).  As $\nu$ decreases the
flow becomes increasingly complex and we focus subsequent studies of
the inverse problem on the mildly chaotic regime which arises for $\nu
= 0.01$. We use a time-step of $\delta t = 0.005$.  The data is
generated by computing a true signal solving the Navier-Stokes
equation at the desired value of $\nu$, and then adding Gaussian
random noise to it at each observation time.  Furthermore, we let $h =
4 \delta t = 0.02$ and take $L=10$, so that $T=0.2$.  We take $M=32^2$
spatial observations at each observation time. The observations are
made at the gridpoints; thus the observations include all numerically
resolved, and hence observable, wavenumbers in the system.
Since the noise is added in spectral space in practice, for
convenience we define $\sigma = \gamma/ \sqrt{M}$ and present results
in terms of $\sigma$.  The same grid is used for computing
the reference solution and for computing the MAP estimator.

Figure~\ref{fig:post_cons} illustrates the posterior consistency which
arises as the observational noise strength $\gamma \to 0$.  The three
curves shown quantify: (i) the relative error of the MAP estimator
$u^*$ compared with the truth, $u^{\dagger}$; (ii) the relative error
of $\cG(u^*)$ compared with $\cG(u^{\dagger})$; and (iii) the relative
error of $\cG(u^*)$ with respect to the observations $y$.  The figure
clearly illustrates Theorem~\ref{t:J_n}, via the dashed curve for
(ii), and indeed shows that the map estimator itself is converging to
the true initial condition, via the solid curve (i), as $\gamma \to
0$.  Recall that the observations approach the true value of the
initial condition, mapped forward under $\cG$, as $\gamma \to 0$, and
note that the dashed and dashed-dotted curves shows that the image of
the MAP estimator under the forward operator $\cG$, $\cG(u^*)$, is
closer to $\cG(u^\dagger)$ than $y$, asymptotically as $\gamma \to 0$.


\section{Applications in Conditioned Diffusions}
\label{sec:cd}

In this section we consider the MAP estimator for conditioned
diffusions, including bridge diffusions and an application to
filtering/smoothing.  We identify the Onsager-Machlup functional
governing the MAP estimator in three different cases. We
demonstrate numerically that this functional may have more than one
minimiser.  Furthermore, we illustrate the results of the consistency
theory in section~\ref{s:consistency} using numerical experiments.
Subsection~\ref{ssec:un} concerns the unconditioned case, and includes
the assumptions made throughout. Subsections~\ref{ssec:bd} and
\ref{ssec:fs} describe bridge diffusions and the filtering/smoothing
problem respectively.  Finally, subsection~\ref{ssec:ne} is devoted to
numerical experiments for an example in filtering/smoothing.

\subsection{Unconditioned Case}
\label{ssec:un}

For simplicity we restrict ourselves to scalar processes with additive
noise, taking the form
\begin{equation}\label{eq:SDE}
  du
  = f(u) \,dt + \sigma \,dW, \quad u(0)=u^-.
\end{equation}
If we let $\nu$ denote the measure on $X := C\bigl([0,T];\R\bigr)$
generated by the stochastic differential equation (SDE) given
in~\eqref{eq:SDE}, and $\nu_0$ the same measure obtained in the case
$f \equiv 0$, then the Girsanov theorem states that $\nu \ll \nu_0$
with density
\begin{equation*}
  \frac{\ud \nu}{\ud \nu_0}(u)
  = \exp\Bigl(-\frac{1}{2\sigma^2}\int_0^T \bigl|f\bigl(u(t)\bigr)\bigr|^2 \,dt+\frac{1}{\sigma^2}\int_0^T f\bigl(u(t)\bigr) \,du(t)\Bigr).
\end{equation*}
If we choose an $F\colon \R \to \R$ with $F'(u)=f(u)$, then an
application of It\^o's formula gives
\begin{equation*}
  dF\bigl(u(t)\bigr)
  = f\bigl(u(t)\bigr) \,du(t)
  +\frac{\sigma^2}{2} f'\bigl(u(t)\bigr) \,dt,
\end{equation*}
and using this expression to remove the stochastic integral we obtain
\begin{equation}\label{eq:rnd2}
  \frac{\ud \nu}{\ud \nu_0}(u)
  \propto \exp\Bigl(-\frac{1}{2\sigma^2}\int_0^T \bigl(\bigl|f\bigl(u(t)\bigr)\bigr|^2+\sigma^2 f'\bigl(u(t)\bigr)\bigr) \,dt
  +\frac{1}{\sigma^2}F\bigl(u(T)\bigr)\Bigr).
\end{equation}
Thus, the measure $\nu$ has a density with respect to the Gaussian
measure $\nu_0$ and \eqref{eq:rnd2} takes the form~\eqref{e:radon}
with $\mu=\nu$ and $\mu_0=\nu_0$: we have
\begin{equation*}
  \frac{\ud \nu}{\ud \nu_0}(u)
  \propto \exp\bigl( - \Phi_1(u) \bigr)
\end{equation*}
where  $\Phi_1\colon X \to \R$ is defined by
\begin{equation}\label{eq:phi1}
  \Phi_1(u)
  = \int_0^T \Psi\bigl(u(t)\bigr) \,dt - \frac{1}{\sigma^2}F\bigl(u(T)\bigr)
\end{equation}
and
\begin{equation*}
  \Psi(u)=\frac{1}{2\sigma^2} \Bigl(|f(u)|^2+\sigma^2 f'(u) \Bigr).
\end{equation*}

We make the following assumption concerning the vector field $f$
driving the SDE:
\begin{assumption}
  The function $f = F'$ in~\eqref{eq:SDE} satisfies the following conditions.
  \begin{enumerate}
  \item $F \in C^2(\R,\R)$ for all $u \in \R$.
  \item There is $M \in \R$ such that
    $\Psi(u) \ge M$ for all $u \in \R$ and $F(u) \le M$
    for all $u \in \R$.
  \end{enumerate}
\end{assumption}

Under these assumptions, we see that $\Phi_1$ given by \eqref{eq:phi1}
satisfies Assumptions~\ref{a:asp1} and, indeed, the slightly stronger
assumptions made in Theorem~\ref{t:MAP}.  Let $H^1[0,T]$ denote the
space of absolutely continuous functions on $[0,T]$.  Then the
Cameron-Martin space $E_1$ for $\nu_0$ is
\begin{equation*}
  E_1=\Bigl\{v \in H^1[0,T]\Bigm| \int_0^T \bigl|v'(s)\bigr|^2\,ds<\infty
  \mbox{ and } v(0)=0\Bigr\}
\end{equation*}
and the Cameron-Martin norm is given by
\begin{equation*}
  \|v\|_{E_1}
  = \sigma^{-1}\|v\|_{H^1}
\end{equation*}
where
\begin{equation*}
  \|v\|_{H^1}
  = \Bigl(\int_0^T \bigl|v'(s)\bigr|^2\,ds\Bigr)^{\frac12}.
\end{equation*}

The mean of $\nu_0$ is the constant function $m \equiv u^-$ and so,
using Remark~\ref{r:mean}, we see that the Onsager-Machlup functional
for the unconditioned diffusion \eqref{eq:SDE} is thus $I_1\colon E_1
\to \R$ given by
\begin{equation*}
  I_1(u)
  = \Phi_1(u) + \frac{1}{2\sigma^2}\| u - u^- \|_{H^1}^2
  = \Phi_1(u) + \frac{1}{2\sigma^2}\| u \|_{H^1}^2.
\end{equation*}
Together, Theorems \ref{t:OM} and~\ref{t:MAP} tell us that this
functional attains its minimum over $E_1'$ defined by
\begin{equation*}
  E_1'
  = \Bigl\{v \in H^1[0,T]\Bigm| \int_0^T \bigl|v'(s)\bigr|^2\,ds<\infty
    \mbox{ and } v(0)=u^-\Bigr\}.
\end{equation*}
Furthermore such minimisers define MAP estimators for the
unconditioned diffusion~\eqref{eq:SDE}, \textit{i.e.}\ the most likely
paths of the diffusion.

We note that the regularity of minimisers for $I_1$ implies that the
MAP estimator is $C^2$, whilst sample paths of the SDE \eqref{eq:SDE}
are not even differentiable.  This is because the MAP estimator
defines the centre of a tube in $X$ which contains the most likely
paths. The centre itself is a smoother function than the paths.  This
is a generic feature of MAP estimators for measures defined via
density with respect to a Gaussian in infinite dimensions.

\subsection{Bridge Diffusions}
\label{ssec:bd}

In this subsection we study the probability measure generated by
solutions of \eqref{eq:SDE}, conditioned to hit $u^+$ at time $1$ so
that $u(T)=u^+$, and denote this measure $\mu$. Let $\mu_0$ denote the
Brownian bridge measure obtained in the case $f \equiv 0$.  By
applying the approach to determining bridge diffusion measures in
\cite{HSV07} we obtain, from \eqref{eq:rnd2}, the expression
\begin{equation}\label{eq:rnd3}
  \frac{\ud \mu}{\ud \mu_0}(u) \propto \exp\Bigl(-\int_0^T \Psi\bigl(u(t)\bigr) \,dt
  +\frac{1}{\sigma^2}F\bigl(u^+\bigr)\Bigr).
\end{equation}

Since $u^+$ is fixed we now define $\Phi_2\colon X \to \R$ by
\begin{equation*}
  \Phi_2(u)
  = \int_0^T \Psi\bigl(u(t)\bigr) \,dt
\end{equation*}
and then \eqref{eq:rnd3} takes again the form \eqref{e:radon}.  The
Cameron-Martin space for the (zero mean) Brownian bridge is
\begin{equation*}
  E_2
  = \Bigl\{v \in H^1[0,T]\Bigm| \int_0^T \bigl|v'(s)\bigr|^2\,ds<\infty
    \mbox{ and } v(0)=v(T)=0\Bigr\}
\end{equation*}
and the Cameron-Martin norm is again $\sigma^{-1}\|\quark\|_{H^1}$.
The Onsager-Machlup function for the unconditioned diffusion
\eqref{eq:SDE} is thus $I_2\colon E_2' \to \R$ given by
\begin{equation*}
  I_2(u)
  = \Phi_2(u) + \frac{1}{2\sigma^2}\| u - m\|_{H^1}^2
\end{equation*}
where $m$, given by $m(t) = \frac{T-t}{T} u^- + \frac{t}{T} u^+$ for all
$t\in[0,T]$ , is the mean of $\mu_0$ and
\begin{equation*}
  E_2'
  = \Bigl\{v \in H^1[0,T]\Bigm| \int_0^T \bigl|v'(s)\bigr|^2\,ds<\infty
    \mbox{ and } v(0)=u^-, u(T)=u^+ \Bigr\}.
\end{equation*}
The MAP estimators for~$\mu$ are found by minimising $I_2$
over~$E_2'$.

\subsection{Filtering and Smoothing}
\label{ssec:fs}

We now consider conditioning the measure $\nu$ on observations of the
process $u$ at discrete time points. Assume that we observe $y \in
\R^J$ given by
\begin{equation}\label{eq:times}
  y_j=u(t_j)+\eta_j
\end{equation}
where $0<t_1<\cdots<t_J<T$ and the $\eta_j$ are independent
identically distributed random variables with $\eta_j \sim
N(0,\gamma^2)$.  Let $\bbQ_0(\ud y)$ denote the $\R^J$-valued Gaussian
measure $N(0,\gamma^2 I)$ and let $\bbQ(\ud y|u)$ denote the
$\R^J$-valued Gaussian measure $N(\cG u,\gamma^2 I)$ where $\cG \colon
X \to \R^J$ is defined by
\begin{equation*}
  \cG u=\bigl(u(t_1),\cdots,u(t_J)\bigr).
\end{equation*}
Recall $\nu_0$ and $\nu$ from the unconditioned case and define the
measures $\bbP_0$ and $\bbP$ on $X\times\R^J$ as follows. The measure
$\bbP_0(\ud u,\ud y)=\nu_0(\ud u)\bbQ_0(\ud y)$ is defined to be an
independent product of $\nu_0$ and $\bbQ_0$, whilst $\bbP(\ud u,\ud
y)=\nu(\ud u)\bbQ(\ud y|u)$. Then
\begin{equation*}
  \frac{\ud \bbP}{\ud \bbP_0}(u,y)
  \propto \exp\Bigl(-\int_0^T \Psi\bigl(u(t)\bigr) \,dt
  +\frac{1}{\sigma^2}F\bigl(u(T)\bigr)-\frac{1}{2\gamma^2}\sum_{j=1}^J |y_j-u(t_j)|^2\Bigr)
\end{equation*}
with constant of proportionality depending only on $y$.  Clearly, by
continuity,
\begin{equation*}
  \inf_{\|u\|_X\le 1} \exp\Bigl(-\int_0^T \Psi\bigl(u(t)\bigr) \,dt
  +\frac{1}{\sigma^2}F\bigl(u(T)\bigr)-\frac{1}{2\gamma^2}\sum_{j=1}^J |y_j-u(t_j)|^2\Bigr)>0
\end{equation*}
and hence
\begin{equation*}
  \int_{\|u\|_X \le 1} \exp\Bigl(-\int_0^T \Psi\bigl(u(t)\bigr) \,dt
  +\frac{1}{\sigma^2}F\bigl(u(T)\bigr)-\frac{1}{2\gamma^2}\sum_{j=1}^J |y_j-u(t_j)|^2\Bigr)
  \,\nu_0(du)>0.
\end{equation*}
Applying the conditioning Lemma 5.3 in \cite{HSV07} then gives
\begin{equation*}
  \frac{\ud \mu^y}{\ud \nu_0}(u) \propto
  \exp\Bigl(-\int_0^T \Psi\bigl(u(t)\bigr) \,dt
  +\frac{1}{\sigma^2}F\bigl(u(T)\bigr)-\frac{1}{2\gamma^2}\sum_{j=1}^J |y_j-u(t_j)|^2\Bigr).
\end{equation*}
Thus we define
\begin{equation*}
  \Phi_3(u)
  =\int_0^T \Psi\bigl(u(t)\bigr) \,dt
  -\frac{1}{\sigma^2}F\bigl(u(T)\bigr)
  +\frac{1}{2\gamma^2}\sum_{j=1}^J |y_j-u(t_j)|^2.
\end{equation*}
The Cameron-Martin space is again $E_1$ and the Onsager-Machlup
functional is thus $I_3\colon E_1' \to \R$, given by
\begin{equation}\label{eq:I3}
  I_3(u) = \Phi_3(u) + \frac{1}{2\sigma^2}\|u\|_{H^1}^2.
\end{equation}
The MAP estimator for this setup is, again, found by minimising the
Onsager-Machlup functional~$I_3$.

The only difference between the potentials $\Phi_1$ and $\Phi_3$, and
thus between the functionals $I_1$ for the unconditioned case and
$I_3$ for the case with discrete observations, is the presence of the
term $\frac{1}{2\gamma^2}\sum_{j=1}^J |y_j-u(t_j)|^2$.  In the
Euler-Lagrange equations describing the minima of~$I_3$, this term
leads to Dirac distributions at the observation points $t_1, \ldots,
t_J$ and it transpires that, as a consequence, minimisers of $I_3$
have jumps in their first derivates at $t_1, \ldots, t_J$.  This
effect can be clearly seen in the local minima
of~$I_3$ shown in figure~\ref{fig:smooth-minima}.

\subsection{Numerical Experiments}
\label{ssec:ne}

In this section we perform three numerical experiments related to the
MAP estimator for the filtering/smoothing problem presented in
section~\ref{ssec:fs}.

\begin{figure}
  \begin{center}
    \includegraphics{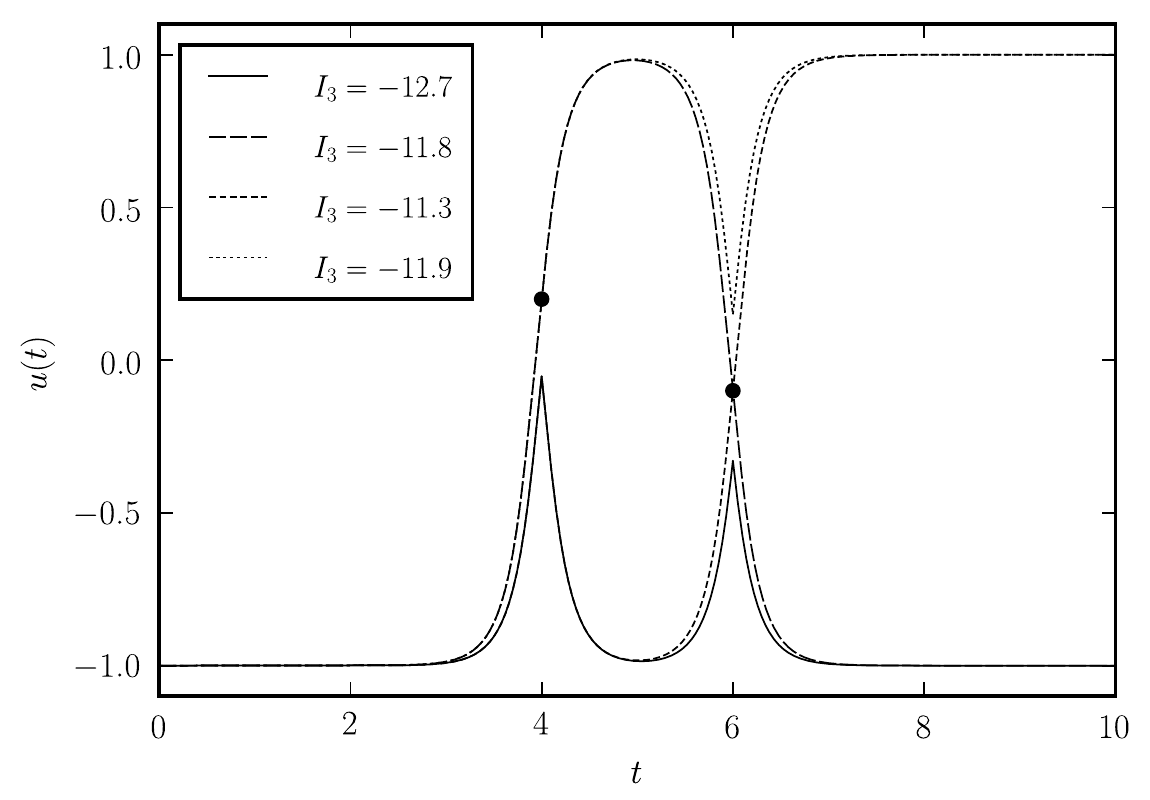}
  \end{center}
  \caption{\label{fig:smooth-minima}Illustration of the problem of
    local minima of $I$ for the smoothing problem with a small number
    of observations.  The process $u(t)$ starts at $u(0)=-1$ and moves
    in a double-well potential with stable equilibrium points at $-1$
    and $+1$.  Two observations of the process are indicated by the
    two black circles.  The curves correspond to four different local
    minima of the functional $I_3$ for this situation.}
\end{figure}

For the experiments we generate a random ``signal'' by numerically
solving the SDE~\eqref{eq:SDE}, using the Euler-Maruyama method, for a
double-well potential~$F$ given by
\begin{equation*}
  F(u) = - \frac{(1-u)^2(1+u)^2}{1+u^2},
\end{equation*}
with diffusion constant $\sigma=1$ and initial value $u^- = -1$.  From
the resulting solution $u(t)$ we generate random observations $y_1,
\ldots, y_J$ using~\eqref{eq:times}.  Then we implement the
Onsager-Machlup functional~$I_3$ from equation~\eqref{eq:I3} and use
numerical minimisation, employing the Broyden-Fletcher-Goldfarb-Shanno
method (see~\cite{Fle00}; we use the implementation found in the GNU
scientific library~\cite{GSL}), to find the minima of $I_3$.  The same
grid is used for numerically solving the SDE and for approximating the
values of~$I_3$.

The first experiment concerns the problem of local minima of $I_3$.
For small number of observations we find multiple local minima; the
minimisation procedure can converge to different {\em local} minima,
depending on the starting point of the optimisation.  This effect
makes it difficult to find the MAP estimator, which is the {\em
  global} minimum of $I_3$, numerically.  The problem is illustrated
in figure~\ref{fig:smooth-minima}, which shows four different local
minima for the case of $J=2$ observations.  In the presence of
local minima, some care is needed when numerically computing
the MAP estimator.  For example, one could start the minimisation
procedure with a collection of different starting points, and take
the best of the resulting local minima as the result.
One would expect this
problem to become less pronounced as the number of observations
increases, since the observations will ``pull'' the MAP estimator
towards the correct solution, thus reducing the number of local
minima.  This effect is confirmed by experiments: for larger numbers
of observations our experiments found only one local minimum.

\begin{figure}
  \begin{center}
    \includegraphics{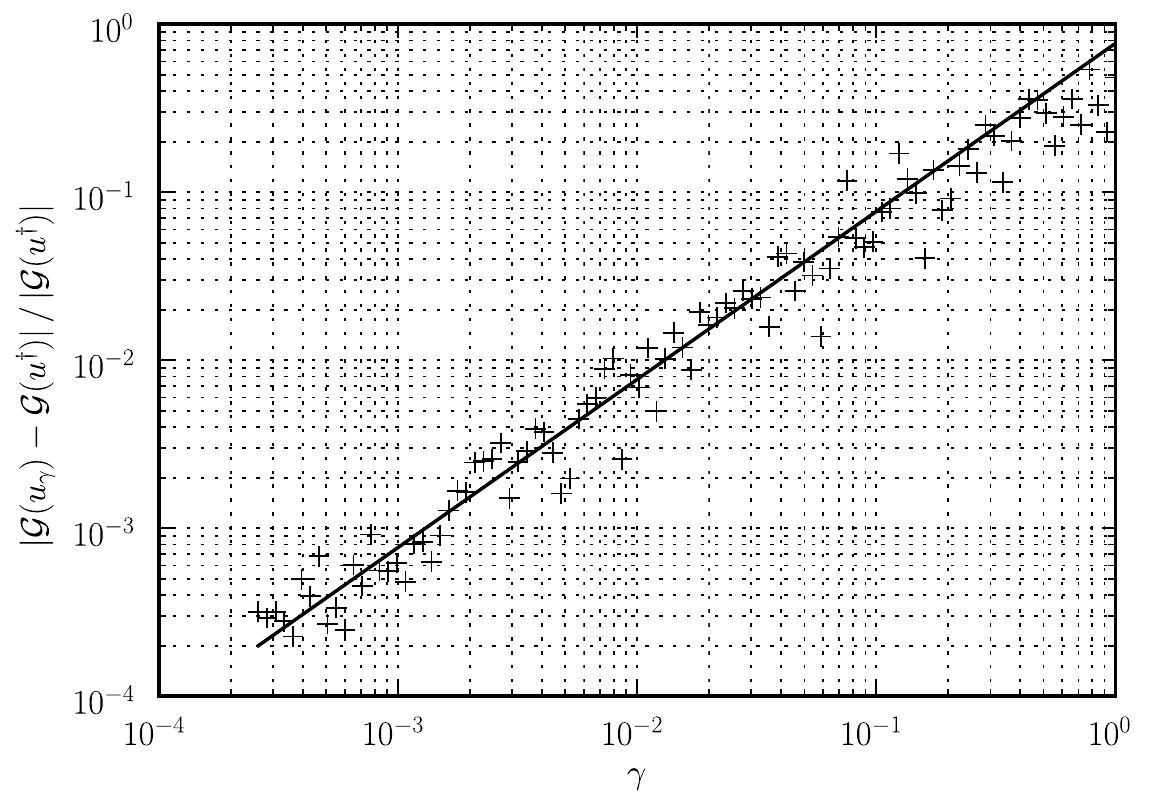}
  \end{center}
  \caption{\label{fig:smooth-noise}Illustration of posterior
    consistency for the smoothing problem in the small-noise limit.
    The marked points correspond the maximum-norm distance between the
    true signal $u^\dagger$ and the MAP estimator $u_\gamma$ with
    $J=5$ evenly spaced observations.  The map $\cG(u) =
    \bigl(u(t_1), \ldots, u(t_J)\bigr)$ is the projection of the
    path onto the observation points.  The solid line is a fitted
    curve of the form~$c \gamma$.}
\end{figure}

The second experiment concerns posterior consistency of the MAP
estimator in the small noise limit.  Here we use a fixed number~$J$ of
observations of a fixed path of~\eqref{eq:SDE}, but let the variance
$\gamma^2$ of the observational noise $\eta_j$ converge to~$0$.
Noting that the exact path of the SDE, denoted by $u^\dagger$
in~\eqref{e:obs_del}, has the regularity of a Brownian motion and
therefore the observed path is not contained in the Cameron-Martin
space~$E_3$, we are in the situation described in Corollary
\ref{c:utrX0}.  Our experiments indicate that we have $\cG(u_\gamma)
\to \cG(u^\dagger)$ as $\gamma \downarrow0$, where $u_\gamma$ denotes
the MAP estimator corresponding to observational variance~$\gamma^2$,
confirming the result of Corollary~\ref{c:utrX0}.
As discussed above, for small values of $\gamma$ one would
expect the minimum of $I_3$ to be unique and indeed experiments
where different starting points of the optimisation procedure were tried
did not find different minima for small~$\delta$.
The result of a
simulation with $J=5$ is shown in figure~\ref{fig:smooth-noise}.

\begin{figure}
  \begin{center}
    \includegraphics{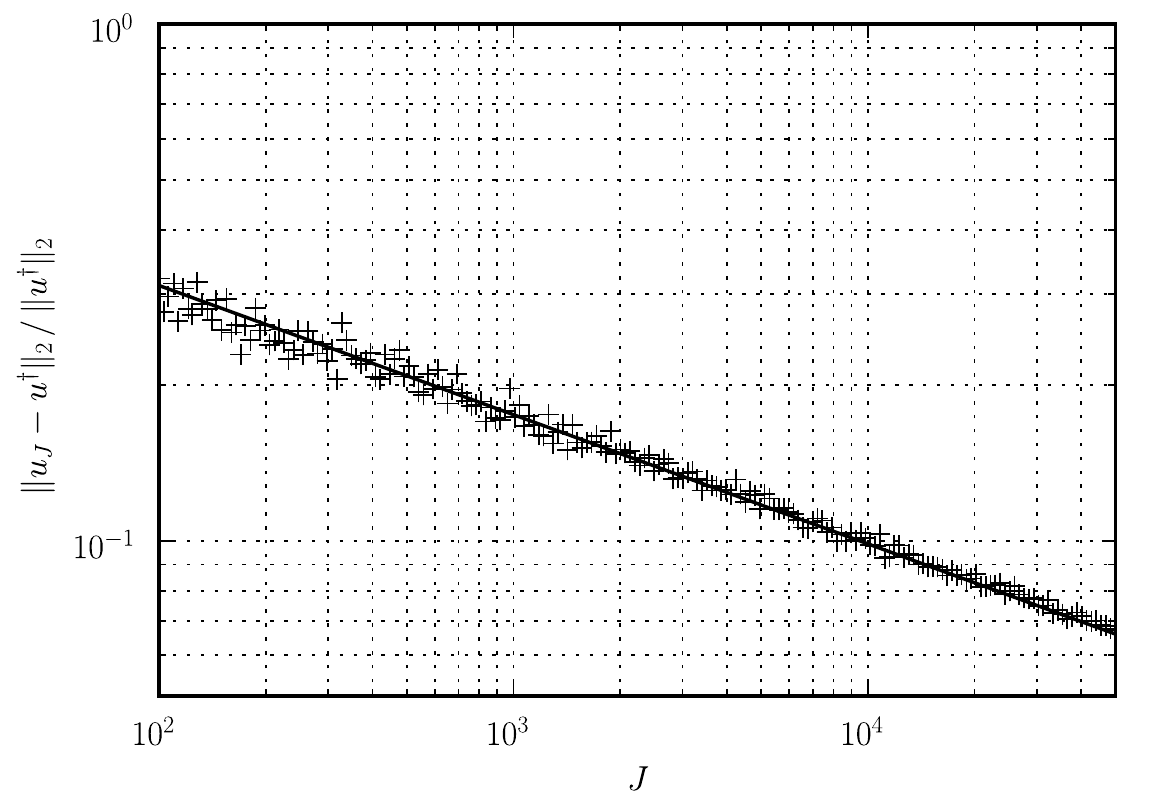}
  \end{center}
  \caption{\label{fig:smooth-data}Illustration of posterior
    consistency for the smoothing problem in the large sample size limit.
    The marked points correspond the supremum-norm distance between
    the true signal $u^*$ and the MAP estimator $u^\dagger_J$ with $J$
    evenly spaced observations.  The solid line give a fitted curve of
    the form $c J^{-\alpha}$; the exponent $\alpha=-1/4$
    was found numerically.}
\end{figure}

Finally, we perform an experiment to illustrate posterior consistency
in the large sample size limit: for this experiment we still use one fixed
path $u^\dagger$ of the SDE~\eqref{eq:SDE}.  Then, for different
values of~$J$, we generate observations $y_1, \ldots, y_J$
using~\eqref{eq:times} at equidistantly spaced times $t_1, \ldots,
t_J$, for fixed $\gamma=1$, and then determine the $L^2$ distance of
the resulting MAP estimate $u_J$ to the exact path~$u^\dagger$.
As discussed above, for large values of $J$ one would
expect the minimum of $I_3$ to be unique and indeed experiments
where different starting points of the optimisation procedure were tried
did not find different minima for large~$J$.  The
situation considered here is not covered by the theoretical results
from section~\ref{s:consistency}, but the results of the numerical
experiment, shown in figure~\ref{fig:smooth-data} indicate that
posterior consistency still holds.


\bibliographystyle{99}

\end{document}